\newtheorem{theorem}{Theorem}[section]
\newtheorem{corollary}{Corollary}[section]
\newtheorem{assumption}{Assumption}[section]
\newtheorem{remark}{Remark}[section]
\newcommand{\Z}{\mathbb{Z}}
\newcommand{\weakc}{\rightharpoonup}
\newcommand{\R}{\mathbb{R}}
\newcommand{\C}{\mathbb{C}}
\newcommand{\eps}{\epsilon}
\newcommand{\grad}{\nabla}
\newcommand{\ep}{\varepsilon}
\begin{document}
\setlength{\parskip}{1mm}
\setlength{\oddsidemargin}{0.1in}
\setlength{\evensidemargin}{0.1in}
\lhead{}
\rhead{}

\begin{flushleft}
\Large 
\noindent{\bf \Large A direct method for reconstructing inclusions and boundary conditions from electrostatic data}
\end{flushleft}

\vspace{0.2in}

{\bf  \large Isaac Harris}\\
\indent {\small Department of Mathematics, Purdue University, West Lafayette, IN 47907, USA}\\
\indent {\small Email: \texttt{harri814@purdue.edu}}\\



\section{Introduction}\label{intro}
In this paper, we use direct methods (otherwise known as non-iterative methods) to reconstruct impenetrable inclusions from electrostatic Cauchy data. This problem models the non-destructive testing for interior inclusion using the voltage and current measurements on the accessible outer boundary. In particular, for a Dirichlet or Impedance inclusion we derive a sampling algorithm to recover the inclusion from the knowledge of the Dirichlet-to-Neumann (DtN) mapping. We focus on the case of Laplace's equation but the techniques used in this paper still hold for the case where the Laplacian is replaced with a uniformly elliptic operator in divergence form with sufficiently smooth coefficients. This gives a computationally simple way to solve the inverse problem of reconstructing the inclusion from the knowledge of DtN mapping. An important feature of sampling methods is that one does not need a prior information about the type or number of inclusions, unlike iterative methods where one needs to have some a prior knowledge of the inclusions to ensure convergence.  See \cite{iterative-inclusion, uniqueness, EIT-impedance} for examples of iterative methods applied to reconstructing impenetrable inclusions. Sampling algorithms have grown in popularity over the past two decades since their inception in \cite{CK} as a computationally simple way to recover obstacles. These method where first used to recover unknown obstacles from time-harmonic scattering data (see monographs \cite{CCbook,kirschbook} and the references therein). Over the years these methods have been employed to solve similar problems in the time domain. In \cite{TDLSM-wave,TDLSM-elastic} the Linear Sampling Method is applied to the acoustic and elastic wave equation, respectively. Recently in \cite{impedance-heat} the Linear Sampling Method was applied to recovering an impedance inclusion in a heat conductor.

Once the boundary of the inclusion is reconstructed, we then consider the problem of determining the boundary conditions on the interior boundary from the knowledge of the boundary and the DtN mapping. This amounts to solving our inverse problem in two steps where we first determine the boundary from the DtN mapping and then use the reconstructed boundary to determine the boundary conditions. Since we know that the Cauchy data on the outer boundary uniquely determines the electrostatic potential by the unique continuation principle we derive a system of boundary integral equations to reconstruct the Cauchy data on the interior boundary. From this one can determine the boundary condition on the interior boundary. We focus on the case of an impedance condition, where we provide an inversion method for determining the impedance parameter from the recovered Cauchy data. In our investigation of this problem we are able to show that the DtN mapping uniquely determines the $L^\infty$ impedance parameter. It should be noted that uniqueness for both the inclusion and the impedance condition follows from two pairs of Cauchy data (suitable chosen) from \cite{unique-imp} in the case of an inclusion with $C^{2, \alpha}$ boundary and $C^{1 , \alpha}$ impedance function.

The rest of the paper is structured as follows. We begin by formulating the direct and inverse problem under consideration. Next, we consider the problem of reconstructing the interior Dirichlet or Impedance boundary from the electrostatic Cauchy data. To this end, a sampling method is derived to determine the inclusion. We then turn our attention to reconstructing the impedance parameter given the interior boundary and the DtN mapping. Uniqueness is proven and a inversion algorithm is described using boundary integral equation. Lastly, we provide numerical experiments in 2-dimensions to show the feasibility of our inversion algorithm.  

\section{Statement of the Direct and Inverse Problem}
We begin by considering the boundary value problems associated with the electrostatic problem with and without an impenetrable inclusions as derived from the quasi-static Maxwell's equations. Assume that $D \subset \R^d$ (for $d=2$ or 3) is a simply connected open set with $C^2$-boundary $\partial D$ with unit outward normal $\nu$. 
Now let $\Omega \subset D$ be a (possibly multiple) simply connected open set with $C^2$-boundary $\partial \Omega$, where we assume that $\text{dist}(\partial D, \partial \Omega)>0.$ 
For a material without an inclusion, we define $u \in H^1(D)$ to be the unique solution to the following boundary value problem   
\begin{align}
\Delta u=0 \quad \text{in} \quad  D \quad \text{with} \quad u  \big|_{\partial D}= f. \label{healthy}
\end{align} 
for a given $f \in H^{1/2}(\partial D)$. The function $u$ is the electrostatic potential for material without defects. Now for the defective material with an impenetrable  inclusion, we define $u_0 \in H^1(D \setminus \overline{\Omega})$ as the solution to 
\begin{align}
\Delta u_0=0 \quad \text{in} \quad  D \setminus \overline{\Omega}  \quad \text{with} \quad u_0  \big|_{\partial D}= f \quad \text{and} \quad \mathcal{B}(u_0) \big|_{\partial \Omega}=0. \label{defective}
\end{align} 
for a given $f \in H^{1/2}(\partial D)$. Here the function $u_0$ is the electrostatic potential for the defective material and the boundary operator $\mathcal{B}$ is given by 
$\mathcal{B}(u_0) = u_0$ the Dirichlet boundary condition on $\partial \Omega$ or  $\mathcal{B}(u_0) = \nu \cdot \grad u_0 + \gamma(x) u_0$  the impedance boundary condition on $\partial \Omega$.
We assume that the impedance parameter $\gamma(x)$ is a non-trivial function in 
$$L^{\infty }_{+}(\partial \Omega) := \Big\{ \gamma \in L^{\infty}(\partial \Omega) \, \, : \, \, \inf\limits_{\partial \Omega} \gamma (x) \geq 0 \Big\}.$$
Here we take $\nu$ to be the unit outward normal to the domain $D \setminus \overline{\Omega}$, see Figure \ref{dp-pic}. 
\begin{figure}[H]
\centering
\includegraphics[scale=0.3]{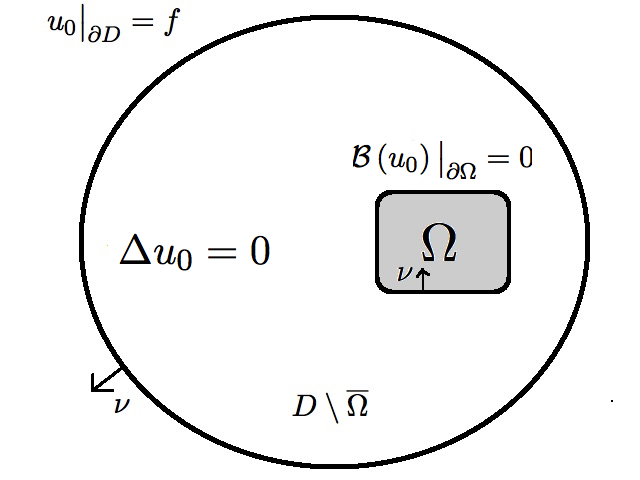}
\caption{ The electrostatic problem for material with an inclusion. }
\label{dp-pic} 
\end{figure}

Assume that the `voltage' $f$ is applied on the boundary $\partial D$ and the current $ \nu \cdot \grad u_0 = \partial_\nu u_0$  is `measured' on $\partial D$. From these measurements we wish to reconstruct the impenetrable inclusion $\Omega$ without any a prior knowledge of the number of inclusions or boundary condition on $\partial \Omega$. Also, if we assume that it is known {\it a priori} that the boundary condition is of impedance type we wish to also reconstruct the parameter $\gamma$.

For the case of a perfectly conducting inclusion i.e. zero Dirichlet condition on $\partial \Omega$ it has been shown in \cite{uniqueness} that a single pair of voltage and current measurements on $\partial D$ can be used to determine $\partial \Omega$. In \cite{iterative-inclusion,uniqueness}  iterative methods based on conformal mapping is used to reconstruct a single perfectly conducting inclusion. The question of unique determination of the boundary $\partial \Omega$ and impedance condition $\gamma (x)$ is more involved than for the case of the perfectly conduction inclusion. It has been shown in \cite{unique-imp} that two pairs of voltage and current measurements on the boundary $\partial D$ are enough to determine $\partial \Omega$ and $\gamma(x)$ provided the currents are linearly independent and non-negative assuming that $\gamma (x) \in C^{1,\alpha}(\partial \Omega)$ and $\partial \Omega$ is class $C^{2,\alpha}$ for $0<\alpha <1$. See \cite{C-Map-imp,EIT-impedance} where iterative methods are proposed to determine the inclusion and the impedance. The goal here is to develop a sampling method to reconstruct the boundary of the inclusion $\partial \Omega$ then one can reconstruct $\gamma(x)$ using a system of boundary integral equations, which is less computationally expensive to reconstruct the impedance parameter once the boundary is known.

By our assumptions on the impedance parameter $\gamma(x)$ it can easily  be shown that both \eqref{healthy} and \eqref{defective} are well-posed using variational techniques (see Chapter 5 of \cite{evans}) for the Dirichlet or impedance boundary condition on the inclusion. By the linearity of the equation and boundary conditions we have that the voltage to electrostatic potential mappings 
$$ f \longmapsto u  \quad \text{and} \quad f \longmapsto u_0 $$
are bounded linear operators from $H^{1/2}(\partial D)$ to $H^1(D)$ and $H^1 \big(D \setminus \overline{\Omega} \big)$, respectively. We now define the DtN mappings from $H^{1/2}(\partial D) \longmapsto H^{-1/2}(\partial D)$ such that 
$$\Lambda f=\partial_{\nu} u \big|_{\partial D} \quad \text{ and } \quad \Lambda_0 f=\partial_{\nu} u_0 \big|_{\partial D}.$$
Due to the well-posedness of \eqref{healthy} and \eqref{defective} along with the Trace Theorem (see for e.g. \cite{evans}) it follows that the DtN mappings are bounded linear operators. The {\it inverse shape problem} we consider here is to reconstruct the support of the impenetrable inclusion $\Omega$ from the knowledge of the DtN mappings $\Lambda$ and $\Lambda_0$, i.e. we want to determine the boundary $\partial \Omega$ from the set of all possible measurements $(f \, , \partial_{\nu} u)$ and $(f \, , \partial_{\nu} u_{0})$ on $\partial D$. Moreover, for the case where $\mathcal{B}(u_0)$ is given by the impedance boundary condition on $\partial \Omega$ we consider the {\it inverse impedance problem} of recovering the impedance function $\gamma$ from the knowledge of the DtN mappings $\Lambda_0(\gamma)$.

\section{A Sampling Method for the Inverse Shape Problem}
We now derive a Sampling Method for our inverse {shape} problem. Therefore, we now consider the inverse problem of reconstructing $\partial \Omega$ from the knowledge of $\Lambda_0 = \Lambda_0(\partial \Omega)$. The goal is to first reconstruct the boundary $\partial \Omega$ via a sampling method and provided that $\mathcal{B}(u_0)$ is given by the impedance boundary condition we then reconstruct the impedance parameter using boundary integral equations in the following section. The sampling method is based on connecting the support of the unknown region to an ill-posed equation involving the operator defined by the measurements (i.e. the difference of the DtN mappings).  First we decompose the difference of the DtN mappings and analyze the operators used in our decomposition, then we develop the sampling methods using the decomposition to derive an inversion algorithms to determine the support of the inclusion. For the case of multiple inclusions we let $\Omega= \bigcup_{m=1}^{M} \Omega_m$ where $\Omega_m \subset D$ is a simply connected open set with $C^2$-boundary $\partial \Omega_m$ such that $\Omega_i \cap \Omega_j$ is empty for all $i \neq j$. All of the analysis in the preceding sections holds where the space Trace spaces $H^{\pm 1/2}(\partial \Omega)$ are understood as the product space $H^{\pm 1/2}(\partial \Omega_1) \times \cdots \times H^{\pm 1/2}(\partial \Omega_M)$. 

\begin{remark} \label{by-the-way}
The results in section can easily be extended for the case when the Laplacian is replaced by $\grad \cdot A(x) \grad$ where $A(x) \in C^1(D, \C^{d \times d})$ is given by $$ A(x)=\sigma(x) - \text{i} \, \omega \eps(x)$$
where the electric conductivity $\sigma(x)$ and electric permittivity $\eps(x)$ are symmetric real valued matrices such that
$$ \overline{\xi} \cdot  \sigma(x)  \xi \geq \sigma_{\text{min} } |\xi|^2 \quad \text{and} \quad  \overline{\xi} \cdot  \eps(x)  \xi \geq  0$$
for all $\xi\in{\mathbb C}^d$ for almost every $x\in D$ with frequency $\omega \geq 0$. 
\end{remark}

We now develop a sampling method to reconstruct the support of $\Omega$ from a knowledge of the difference of the DtN operators $(\Lambda - \Lambda_0)$ for the Dirichlet or Impedance boundary condition. As we will see later in this section this method can be used without having any a prior information about the type or number of inclusions. To do so, we will decompose the difference of the DtN operators using two operators, the first mapping the voltage $f$ to an appropriate boundary value on $\partial \Omega$ and the second mapping takes the aforementioned boundary values on $\partial \Omega$ to the difference of the currents $\partial_{\nu} (u-u_0)$ on $\partial D$.

Notice, that the difference of the currents on $\partial D$ is given by $(\Lambda - \Lambda_0)f$ on $\partial D$. Therefore, consider the difference of the solutions $u-u_0$ in $H^1( D \setminus \overline{\Omega})$ which satisfies 
\begin{subequations}
\begin{align}
\Delta  (u-u_0 )=0 \quad &\text{in} \quad  D \setminus \overline{\Omega} \label{difference1}  \\ 
(u-u_0)  \big|_{\partial D}= 0 \quad &\text{and} \quad  \partial_{\nu} (u-u_0 ) \big|_{\partial \Omega} \in H^{-1/2}(\partial \Omega). \label{difference}
\end{align} 
\end{subequations}
Now, motivated by equation \eqref{difference1}-\eqref{difference} we let $w \in H^1( D \setminus \overline{\Omega})$ be the unique solution to 
\begin{align}
\Delta  w=0 \quad &\text{in} \quad  D \setminus \overline{\Omega}   \quad \text{with} \quad w \big|_{\partial D}= 0 \quad \text{and} \quad  \partial_{\nu} w \big|_{\partial \Omega} =h \label{equ-w}
\end{align} 
for a given $h \in H^{-1/2}(\partial \Omega)$. Again, using a variational method one can show that equation \eqref{equ-w} is well-posed, so we can define via the Trace Theorem the bounded linear operator
$$ G: H^{-1/2}(\partial \Omega) \longmapsto H^{-1/2}(\partial D) \quad \text{ given by } \quad Gh = \partial_{\nu} w \big|_{\partial D}$$
where $w$ is the unique solution to equation \eqref{equ-w}. Notice that for $h= \partial_{\nu} (u-u_0 ) \big|_{\partial \Omega}$ then $Gh=(\Lambda - \Lambda_0)f$. 
We now define the bounded linear operators $H^{1/2}(\partial D) \longmapsto H^{-1/2}(\partial \Omega)$
such that 
$$  L f =  \partial_{\nu} u\big|_{\partial \Omega} \quad \text{ and } \quad L_0 f =  \partial_{\nu} u_0 \big|_{\partial \Omega}$$
where $u$ and $u_0$ are the solutions of \eqref{healthy} and \eqref{defective} respectively. This gives the following decomposition.

\begin{theorem}\label{decomp}
The difference of the DtN operators $$(\Lambda - \Lambda_0)  : H^{1/2}(\partial D) \longmapsto H^{-1/2}(\partial D)$$ associated with \eqref{healthy} and \eqref{defective} has the factorization $ (\Lambda - \Lambda_0) = G(L-L_0)$.
\end{theorem}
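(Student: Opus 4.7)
The plan is to exhibit $u-u_0$ as the unique solution of the boundary value problem defining $G$, with Neumann data equal to $(L-L_0)f$, and then read off the identity by evaluating the normal trace on $\partial D$.

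First I would verify that the three quantities involved are well defined. Since $u\in H^1(D)$ and $u_0\in H^1(D\setminus\overline{\Omega})$ are both harmonic and $\dist(\partial D,\partial \Omega)>0$, interior regularity (or simply the normal-trace theorem for $H(\mathrm{div})$-fields applied to $\nabla u$ and $\nabla u_0$) gives $\partial_\nu u|_{\partial\Omega},\partial_\nu u_0|_{\partial\Omega}\in H^{-1/2}(\partial\Omega)$, so $Lf$ and $L_0f$ are meaningful elements of $H^{-1/2}(\partial\Omega)$. The same reasoning on $\partial D$ makes $\Lambda f,\Lambda_0 f\in H^{-1/2}(\partial D)$.

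Next I would show that $v:=u-u_0\in H^1(D\setminus\overline{\Omega})$ solves problem \eqref{equ-w} with datum $h=(L-L_0)f$. Subtracting, $\Delta v=0$ in $D\setminus\overline{\Omega}$, and since both $u$ and $u_0$ have trace $f$ on $\partial D$ we have $v|_{\partial D}=0$. On the interior boundary, $\partial_\nu v|_{\partial\Omega}=\partial_\nu u|_{\partial\Omega}-\partial_\nu u_0|_{\partial\Omega}=(L-L_0)f$ by definition of $L$ and $L_0$. Thus $v$ satisfies the mixed boundary value problem defining the solution operator for $G$ with input $h=(L-L_0)f$.

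By the well-posedness of \eqref{equ-w}, which the authors have already noted follows from a standard variational argument on $H^1_{0,\partial D}(D\setminus\overline{\Omega})$, the solution is unique, so $v\equiv w$ where $w$ is the solution used to define $G\bigl((L-L_0)f\bigr)$. Taking the normal trace on $\partial D$ therefore yields
\begin{equation*}
G(L-L_0)f \;=\; \partial_\nu w\big|_{\partial D} \;=\; \partial_\nu (u-u_0)\big|_{\partial D} \;=\; (\Lambda-\Lambda_0)f.
\end{equation*}
Since $f\in H^{1/2}(\partial D)$ was arbitrary, this proves the claimed factorization.

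There is no serious obstacle here; the proof is essentially bookkeeping once the right decomposition is written down. The only mildly delicate point is justifying that $\partial_\nu u|_{\partial\Omega}$ makes sense, which I would handle by the $H(\mathrm{div})$ normal trace (noting that $\Delta u=0$ gives $\nabla u\in H(\mathrm{div};D)$) rather than relying on higher Sobolev regularity, since only $C^2$ smoothness of $\partial\Omega$ and $f\in H^{1/2}(\partial D)$ are assumed.
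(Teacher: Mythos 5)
Your argument is correct and is precisely the one the paper uses (implicitly, in the discussion preceding the theorem): identify $u-u_0$ as the solution of \eqref{equ-w} with Neumann datum $h=(L-L_0)f$ and read off $Gh=(\Lambda-\Lambda_0)f$. Your extra care in justifying $\partial_\nu u|_{\partial\Omega}\in H^{-1/2}(\partial\Omega)$ via the $H(\mathrm{div})$ normal trace is a welcome refinement but not a departure from the paper's route.
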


We now analyze the operators used to factorize the difference of the DtN operators. We begin by analyzing the operator
$$G: H^{-1/2}(\partial \Omega) \longmapsto H^{-1/2}(\partial D).$$ 
In the following results we obtain the properties of this operator. 

\begin{theorem}\label{G}
The operator $G: H^{-1/2}(\partial \Omega) \longmapsto H^{-1/2}(\partial D)$ {given by} $Gh = \partial_{\nu} w \big|_{\partial D}$
where $w$ is the unique solution to equation \eqref{equ-w}  is compact and injective. 
\end{theorem}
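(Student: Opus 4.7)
The plan is to establish compactness and injectivity separately. Compactness will come from an interior elliptic regularity argument: because $\mathrm{dist}(\partial D,\partial\Omega)>0$, the solution $w$ of \eqref{equ-w} is harmonic in a collar neighborhood of $\partial D$, so $G$ actually factors through the smoother space $H^{1/2}(\partial D)$, which embeds compactly into $H^{-1/2}(\partial D)$ by Rellich's theorem. Injectivity will follow from unique continuation for harmonic functions, since $Gh=0$ produces vanishing Cauchy data on $\partial D$.

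For the compactness step, I would first use the well-posedness of \eqref{equ-w} to record the bound $\|w\|_{H^1(D\setminus\overline{\Omega})}\le C\|h\|_{H^{-1/2}(\partial\Omega)}$. Next, I would choose a tubular neighborhood $B\subset D\setminus\overline{\Omega}$ of $\partial D$ with $\mathrm{dist}(B,\partial\Omega)>0$, together with a smooth cutoff $\chi$ equal to $1$ in a still smaller neighborhood of $\partial D$ and vanishing on $\partial B\setminus\partial D$. The function $\chi w$ then solves a Dirichlet problem on $B$ with zero boundary data and right-hand side $-2\nabla\chi\cdot\nabla w - w\Delta\chi\in L^2(B)$, whose norm is bounded by $\|w\|_{H^1(D\setminus\overline{\Omega})}$. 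Applying standard $H^2$ elliptic regularity (Chapter~6 of \cite{evans}) gives $\|\chi w\|_{H^2(B)}\le C\|h\|_{H^{-1/2}(\partial\Omega)}$, and since $\chi\equiv 1$ on $\partial D$ the trace theorem yields $\|Gh\|_{H^{1/2}(\partial D)}\le C\|h\|_{H^{-1/2}(\partial\Omega)}$. Composing with the compact inclusion $H^{1/2}(\partial D)\hookrightarrow H^{-1/2}(\partial D)$ then gives that $G$ is compact.

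For injectivity, I would assume $Gh=0$, so $w$ satisfies $\Delta w=0$ in $D\setminus\overline{\Omega}$ with $w|_{\partial D}=0$ and $\partial_\nu w|_{\partial D}=0$. Extending $w$ by zero across $\partial D$ produces a weak harmonic function in an open set straddling $\partial D$ (the zero Cauchy data ensure matching of traces), and classical unique continuation — or equivalently Holmgren's theorem, since $\partial D$ is $C^2$ — forces $w\equiv 0$ on the connected component of $D\setminus\overline{\Omega}$ adjacent to $\partial D$. Because $D$ is simply connected and each $\Omega_m$ is an interior, disjoint, simply connected inclusion, $D\setminus\overline{\Omega}$ is connected, and therefore $w\equiv 0$ throughout $D\setminus\overline{\Omega}$. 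Taking the Neumann trace on $\partial\Omega$ gives $h=\partial_\nu w|_{\partial\Omega}=0$, as desired.

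Neither step is genuinely delicate; the main points requiring care are the choice of cutoff so that the equation for $\chi w$ has an $L^2$ right-hand side bounded uniformly in $h$, and the brief topological observation that $D\setminus\overline{\Omega}$ is connected before invoking unique continuation.
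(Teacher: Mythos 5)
Your proof is correct and follows essentially the same route as the paper: both arguments show that $G$ gains regularity near $\partial D$ (the paper via an intermediate domain $D_0$ and global elliptic regularity, you via a cutoff and $H^2$ estimates --- the same idea implemented slightly differently) and then compose with the compact embedding $H^{1/2}(\partial D)\hookrightarrow H^{-1/2}(\partial D)$, while injectivity in both cases is unique continuation from vanishing Cauchy data on $\partial D$. Your added remark that $D\setminus\overline{\Omega}$ is connected is a detail the paper leaves implicit, but it is not a new idea.
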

\begin{proof}
We begin by proving the compactness. Notice that by interior elliptic regularity (see for e.g. \cite{evans}) we have that for any $h \in H^{-1/2}(\partial \Omega)$  the solution to \eqref{equ-w} is in $H^2_{loc}(D \setminus \overline{\Omega}).$ Now let $D_0$ be such that $\Omega \subset \overline{D}_0$ and $ \overline{D}_0 \subset D$ where $\partial D_0$ is class $C^2$ and let $w$ be the solution to \eqref{equ-w} which gives that the trace of $w$ on $\partial D_0$ is in $H^{3/2}(\partial D_0)$. This implies that $w$ satisfies 
$$\Delta  w=0 \quad \text{in} \quad  D \setminus \overline{D}_0 \quad \text{with} \quad  w \big|_{\partial D}= 0 \quad \text{and} \quad  w \big|_{\partial D_0} \in  H^{3/2}(\partial D_0)$$
and by global elliptic regularity \cite{evans} we have that $w \in H^2(D \setminus \overline{D}_0)$. The Trace Theorem implies that  $Gh=\partial_{\nu} w \big|_{\partial D} \in H^{1/2}(\partial D)$ and the compact embedding of $H^{1/2}(\partial D)$ into $H^{-1/2}(\partial D)$ proves the claim. 

Now we prove the injectivity. Let $h \in \text{Null} (G)$ and $w$ be the solution to \eqref{equ-w} with boundary data $h$. Therefore, we have that 
$$\Delta w=0 \quad \text{in} \quad  D \setminus \overline{\Omega} \quad  \text{and} \quad  w = \partial_{\nu} w =0 \quad \text{on} \quad  \partial D.$$
By appealing to unique continuation we can conclude that $w=0$ in $D \setminus \overline{\Omega}$ and therefore the Trace Theorem gives that $h=0$, proving the claim. 
\end{proof}

To analyze the operator $G$ further we now compute it's Transpose (Dual) operator $G^{\top}$.
Therefore, let $\langle \cdot \, ,\cdot  \rangle_{\Gamma}$ denote the dual pairing between $H^{1/2}(\Gamma)$ and $H^{-1/2}(\Gamma)$ (with $L^2(\Gamma)$ as the pivot space) and by definition 
$$\langle \, G^{\top} \varphi , h  \, \rangle_{\partial \Omega} = \langle \, \varphi , G h   \, \rangle_{\partial D} = \int\limits_{\partial D} {\varphi} \,   \partial_{\nu} w  \, \text{d}s \quad \text{for all} \quad \varphi \in H^{1/2}(\partial D)\, \, \, \text{and} \, \, \, h \in H^{-1/2}(\partial \Omega). $$
Now take a  lifting of the function $\varphi \in H^{1/2}(\partial D)$ such that $v \in H^1\big(D \setminus \overline{\Omega}\, \big)$ is the unique solution to 
\begin{align}
\Delta  v=0 \quad &\text{in} \quad  D \setminus \overline{\Omega}  \quad \text{ with } \quad v \big|_{\partial D}= \varphi \quad \text{and} \quad  \partial_{\nu} v \big|_{\partial \Omega} =0. \label{equ-v}
\end{align} 
Applying Green's 2nd Theorem and using the boundary value problems \eqref{equ-w} and \eqref{equ-v} gives  
$$\langle  G^{\top} \varphi , \, h \, \rangle_{\partial \Omega} =\int\limits_{\partial D} {\varphi} \,   \partial_{\nu} w  \, \text{d}s =  - \int\limits_{\partial \Omega} {v} \,  h  \, \text{d}s $$
and we can conclude that 
$$G^{\top} : H^{1/2}(\partial D) \longmapsto H^{1/2}(\partial \Omega) \quad  \text{is given by} \quad G^{\top} \varphi = - v \big|_{\partial \Omega}$$
where $v$ is the unique solution to equation \eqref{equ-v}. 

Just as in the proof of Theorem \ref{G} one can clearly see that due to the unique continuation principal that $G^{\top}$ is injective and therefore since (see for e.g. \cite{McLean})
$$ \overline{ \text{Range}(G) } = ^{a} \hspace{-0.025in}\text{Null}\big(G^{\top} \big)$$
(here $^a$ denotes the annihilators) we have the following result. 

\begin{theorem}
The operator $G: H^{-1/2}(\partial \Omega) \longmapsto H^{-1/2}(\partial D)$ {given by} $Gh = \partial_{\nu} w \big|_{\partial D}$
where $w$ is the unique solution to equation \eqref{equ-w} has a dense range. 
\end{theorem}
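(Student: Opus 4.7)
The plan is to deduce density of the range from injectivity of the dual operator $G^\top$, exactly as foreshadowed in the paragraph preceding the statement. By the standard duality identity
$$\overline{\text{Range}(G)} = {}^a\text{Null}\bigl(G^\top\bigr),$$
it suffices to show that $G^\top$ has trivial kernel; once this is done, the annihilator of $\{0\}$ is the full space $H^{-1/2}(\partial D)$, giving density.

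To prove injectivity of $G^\top$, I would take $\varphi \in H^{1/2}(\partial D)$ with $G^\top \varphi = 0$ and let $v$ be the solution to (\ref{equ-v}). The formula $G^\top \varphi = -v|_{\partial \Omega}$ derived above shows that $v|_{\partial \Omega} = 0$. Combined with the Neumann condition $\partial_\nu v|_{\partial \Omega} = 0$ built into (\ref{equ-v}), both Cauchy data of the harmonic function $v$ vanish on $\partial \Omega$. This is the mirror image of the argument used for $G$ in Theorem \ref{G}, except that here the vanishing Cauchy data sit on the interior boundary rather than the exterior one.

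From this Cauchy-data vanishing, I would invoke the unique continuation principle for harmonic functions on the connected domain $D \setminus \overline{\Omega}$ to conclude $v \equiv 0$ throughout $D \setminus \overline{\Omega}$. The Trace Theorem then gives $\varphi = v|_{\partial D} = 0$, so $G^\top$ is injective and the density of $\text{Range}(G)$ follows.

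The only delicate point is that unique continuation from $\partial \Omega$ requires $D \setminus \overline{\Omega}$ to be connected, which holds under the standing assumption that $D$ and each component of $\Omega$ are simply connected with $\text{dist}(\partial D, \partial \Omega) > 0$; in the multiple-inclusion setting one applies the principle componentwise after noting that the trivial Cauchy data on each $\partial \Omega_m$ propagate through the connected exterior region. Beyond this bookkeeping, the argument is essentially identical to the injectivity proof for $G$, and no new estimates are needed.
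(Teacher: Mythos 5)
Your proposal is correct and follows essentially the same route as the paper: the paper also deduces density from the identity $\overline{\text{Range}(G)} = {}^{a}\text{Null}(G^{\top})$ after observing that $G^{\top}\varphi = -v|_{\partial\Omega}$ together with the built-in condition $\partial_{\nu}v|_{\partial\Omega}=0$ forces vanishing Cauchy data on $\partial\Omega$, whence $v\equiv 0$ by unique continuation and $\varphi=0$ by the Trace Theorem. Your added remark about connectedness of $D\setminus\overline{\Omega}$ is a reasonable piece of bookkeeping that the paper leaves implicit.
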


Now we turn our attention to the injectivity of the operator $(L- L_0 )$. 

\begin{assumption}\label{assume}
Assume that for any $g \in H^{-1/2}(\partial \Omega)$ that 
$$\Delta \phi = 0 \quad \text{in} \quad \Omega \ \quad \text{and} \quad  \partial_{\nu} \phi + \gamma \phi = g \quad \text{on} \quad \partial \Omega$$
has a unique solution $\phi \in H^1(\Omega)$ depending continuously on the boundary data. Here $\nu$ is the unit inward norm to the boundary $\partial \Omega$. 
\end{assumption}  
Since $\nu$ is the inward pointing normal it is clear that uniqueness is not guaranteed since the boundary condition will have the wrong sign for the positive impedance parameter. Note that Assumption \ref{assume} is a common feature of sampling method. In \cite{lsmaniso} where the linear sampling method is used to reconstruct anisotropic obstacles using time-harmonic acoustic measurements one must assume that the corresponding wave number is not a so-called interior transmission eigenvalue of the obstacle. 

In our case, Assumption \ref{assume}  says that $\lambda=1$ is not an associated weighted Steklov eigenvalue given by the values $\lambda \in \R$ such that there is a nontrivial solution to 
$$\Delta \phi = 0 \quad \text{in} \quad \Omega \ \quad \text{and} \quad  \partial_{\nu} \phi + \lambda \, \gamma(x) \phi =0 \quad \text{on} \quad \partial \Omega.$$
Since the set of eigenvalues is discrete $\lambda=1$ is almost surely not an eigenvalue for a given domain $\Omega$ and impedance $\gamma(x)$. With Assumption \ref{assume} we now consider the injectivity of of the operator $(L- L_0 )$.

\begin{theorem}
The operator 
$$(L- L_0 ): H^{1/2}(\partial D) \longmapsto H^{-1/2}(\partial \Omega) \quad  \text{given by} \quad (L - L_0) f =  \partial_{\nu}(u- u_0) \big|_{\partial \Omega}$$
where $u$ and $u_0$ are the solutions of \eqref{healthy} and \eqref{defective} is injective.
\end{theorem}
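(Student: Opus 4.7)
Suppose $f\in H^{1/2}(\partial D)$ satisfies $(L-L_0)f=0$, so that $\partial_\nu u = \partial_\nu u_0$ on $\partial\Omega$; the plan is to deduce $f\equiv 0$. The natural object is the difference $w:=u-u_0\in H^1(D\setminus\overline{\Omega})$. Because $u$ and $u_0$ share the same Dirichlet datum $f$ on $\partial D$, and because $\partial_\nu w=0$ on $\partial\Omega$ by hypothesis, $w$ is harmonic in $D\setminus\overline{\Omega}$ with homogeneous mixed Dirichlet--Neumann data. Testing the equation against $w$ and applying Green's identity---the boundary term on $\partial D$ vanishes since $w|_{\partial D}=0$, and the one on $\partial\Omega$ vanishes since $\partial_\nu w|_{\partial\Omega}=0$---yields $\int_{D\setminus\overline{\Omega}}|\nabla w|^2\,\mathrm{d}x=0$, so $w$ is locally constant; the Dirichlet trace on $\partial D$ then forces $w\equiv 0$, and hence $u\equiv u_0$ in $D\setminus\overline{\Omega}$.

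Passing to traces from the exterior side of $\partial\Omega$ gives simultaneously $u|_{\partial\Omega}=u_0|_{\partial\Omega}$ and $\partial_\nu u|_{\partial\Omega}=\partial_\nu u_0|_{\partial\Omega}$, where $\nu$ is the outward normal to $D\setminus\overline{\Omega}$---which coincides with the inward normal to $\Omega$ used in Assumption \ref{assume}. The proof now splits along the two admissible boundary operators. In the Dirichlet case, $u_0|_{\partial\Omega}=0$ gives $u|_{\partial\Omega}=0$; since $u$ is harmonic in all of $D$ (hence in $\Omega$), uniqueness for the interior Dirichlet problem forces $u\equiv 0$ in $\Omega$. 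In the impedance case the matched Cauchy data yield $\partial_\nu u+\gamma u=0$ on $\partial\Omega$, so $u|_\Omega \in H^1(\Omega)$ is harmonic and solves the interior impedance problem of Assumption \ref{assume} with data $g=0$; uniqueness in that assumption then gives $u\equiv 0$ in $\Omega$.

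In either case $u$ is harmonic in the connected open set $D$ and vanishes on the nonempty open subset $\Omega$, so by unique continuation $u\equiv 0$ throughout $D$ and $f=u|_{\partial D}=0$. The one delicate point is not the energy argument but matching the sign and normal-orientation conventions so that Assumption \ref{assume} can be invoked cleanly in the impedance case; once that is verified, everything else reduces to standard well-posedness of the mixed Dirichlet--Neumann problem on $D\setminus\overline{\Omega}$ and unique continuation for harmonic functions. This is also precisely the reason the authors built Assumption \ref{assume} in as a standing hypothesis---it plays the role that an interior transmission eigenvalue exclusion plays in classical sampling methods.
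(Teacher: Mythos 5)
Your proposal is correct and follows essentially the same route as the paper: reduce to $u=u_0$ in $D\setminus\overline{\Omega}$ (the paper cites well-posedness of \eqref{equ-w} where you spell out the energy identity, which is the same fact), then split into the Dirichlet case and the impedance case, invoking Assumption \ref{assume} for the latter, and finish with unique continuation. No gaps.
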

\begin{proof}
To prove the injectivity we split the proof into two parts for the two boundary conditions on $\partial \Omega$ under consideration. First assume that $\mathcal{B}$ is the Dirichlet boundary condition on $\partial \Omega$ and let $f \in \text{Null} (L-L_0)$, therefore by definition we have that $\partial_{\nu} (u-u_0) =0$ on $ {\partial \Omega}$ where $u$ and $u_0$ are the solutions of \eqref{healthy} and \eqref{defective} respectively. This implies that the difference $u-u_0$ solve \eqref{equ-w} with boundary data $h=0$ and by well-posedness we conclude that $u=u_0$ in $D \setminus \overline{\Omega}$. We now have  
$$\Delta  u=0 \quad \text{in} \quad  \Omega \quad  \text{and} \quad  u = 0 \quad \text{on} \quad  \partial \Omega$$
which it follows that $u=0$ in $\Omega$. By unique continuation we have $u=0$ in $D$ which gives that $f=0$. 

Similarly for the Impedance boundary condition on $\partial \Omega$ if we let $f \in \text{Null} (L-L_0)$ then we can conclude that $u=u_0$ in $D \setminus \overline{\Omega}$. This implies 
$$ \Delta  u=0 \quad \text{in} \quad  \Omega \quad  \text{and} \quad   \partial_{\nu} u + \gamma(x) u = 0 \quad \text{on} \quad  \partial \Omega$$
which implies that $u=0$ in $\Omega$ by Assumption \ref{assume}. By again appealing to unique continuation and we conclude that $f=0$, proving the claim. 
\end{proof}

Recall, that the difference of the DtN operators has the decomposition $(\Lambda - \Lambda_0) = G(L-L_0)$. Since $L$ and $L_0$ are both bounded linear operators by appealing to the previous results we have the following.

\begin{corollary}\label{compact/injective}
The difference of the DtN operators $$(\Lambda - \Lambda_0)  : H^{1/2}(\partial D) \longmapsto H^{-1/2}(\partial D)$$ is compact and injective.
\end{corollary}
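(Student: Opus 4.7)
The statement follows immediately from the three preceding results, so my plan is to assemble them rather than introduce any new machinery. Specifically, I would invoke the factorization from Theorem \ref{decomp},
\[
(\Lambda - \Lambda_0) = G(L - L_0),
\]
together with the compactness and injectivity of $G$ (Theorem \ref{G}), the injectivity of $(L - L_0)$ established just above, and the boundedness of $L$ and $L_0$ coming from the well-posedness of \eqref{healthy} and \eqref{defective} combined with the Trace Theorem.

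For compactness, I would observe that $(L - L_0) : H^{1/2}(\partial D) \to H^{-1/2}(\partial \Omega)$ is bounded and $G : H^{-1/2}(\partial \Omega) \to H^{-1/2}(\partial D)$ is compact, so the composition $G(L - L_0)$ is compact by the standard ideal property of compact operators (a compact operator composed on either side with a bounded operator is compact).

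For injectivity, I would take $f \in \mathrm{Null}(\Lambda - \Lambda_0)$ and argue stepwise: the factorization gives $G(L - L_0)f = 0$, injectivity of $G$ forces $(L - L_0)f = 0$, and injectivity of $(L - L_0)$ then forces $f = 0$.

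There is no real obstacle here since all the work has already been done; the only thing to be careful about is citing the correct mapping properties (in particular that $(L - L_0)$ indeed maps into the space $H^{-1/2}(\partial \Omega)$ on which $G$ is defined, which is exactly how the operators were set up in Theorem \ref{decomp}). The corollary is therefore essentially a bookkeeping step that packages the decomposition into the form most useful for the sampling algorithm to follow.
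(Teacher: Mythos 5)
Your proposal is correct and follows exactly the route the paper intends: the paper states this corollary with no separate proof, remarking only that it follows from the factorization $(\Lambda-\Lambda_0)=G(L-L_0)$ of Theorem \ref{decomp}, the compactness and injectivity of $G$ from Theorem \ref{G}, the boundedness of $L$ and $L_0$, and the injectivity of $(L-L_0)$ established just before. Your assembly of these facts (ideal property of compact operators for compactness, composition of injective maps for injectivity) is precisely the intended bookkeeping.
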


We now derive a sampling method to solve our inverse problem. Sampling methods often connect the support of the region of interest to an ill-posed problem where one uses a singular solution to the background equation. The idea is to show that due to the singularity that a particular equation is ``not" solvable unless the singularity is contained in the region of interest. To this end, we prove the following results to derive our inversion method.

\begin{theorem}\label{dense-range}
The difference of the DtN operators  
$$(\Lambda - \Lambda_0)  : H^{1/2}(\partial D) \longmapsto H^{-1/2}(\partial D)$$
is symmetric (i.e. is equal to it's transpose) and therefore has a dense range.
\end{theorem}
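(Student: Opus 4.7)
The plan is to establish symmetry via two applications of Green's second identity (one for $\Lambda$, one for $\Lambda_0$), and then deduce dense range by combining symmetry with the injectivity from Corollary \ref{compact/injective}. Throughout, the dual pairing $\langle \cdot,\cdot\rangle_{\partial D}$ between $H^{1/2}(\partial D)$ and $H^{-1/2}(\partial D)$ can be represented by boundary integrals on $\partial D$ by approximation, so there is no real technical issue with the rough traces.

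For the symmetry of $\Lambda$, I would fix $f,\varphi\in H^{1/2}(\partial D)$ and let $u_f,u_\varphi\in H^1(D)$ be the corresponding solutions of \eqref{healthy}. Since both functions are harmonic in $D$, Green's second identity yields
\begin{equation*}
\langle \varphi,\Lambda f\rangle_{\partial D} \,=\, \int_{\partial D} u_\varphi\,\partial_\nu u_f\,\mathrm{d}s \,=\, \int_{\partial D} u_f\,\partial_\nu u_\varphi\,\mathrm{d}s \,=\, \langle f,\Lambda\varphi\rangle_{\partial D},
\end{equation*}
so $\Lambda^\top=\Lambda$. The same strategy handles $\Lambda_0$ after taking account of the inner boundary. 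Writing the analogous solutions $u_{0,f},u_{0,\varphi}$ of \eqref{defective} and applying Green's second identity in $D\setminus\overline\Omega$, the boundary integral over $\partial(D\setminus\overline\Omega)$ decomposes into a $\partial D$ contribution and a $\partial\Omega$ contribution. In the Dirichlet case $u_{0,f}=u_{0,\varphi}=0$ on $\partial\Omega$ makes the inner contribution vanish outright. In the impedance case, substituting $\partial_\nu u_{0,f}=-\gamma u_{0,f}$ and likewise for $u_{0,\varphi}$ yields
\begin{equation*}
\int_{\partial\Omega}\bigl(u_{0,\varphi}\,\partial_\nu u_{0,f} - u_{0,f}\,\partial_\nu u_{0,\varphi}\bigr)\,\mathrm{d}s \,=\, -\int_{\partial\Omega}\gamma\bigl(u_{0,\varphi}u_{0,f} - u_{0,f}u_{0,\varphi}\bigr)\,\mathrm{d}s \,=\,0,
\end{equation*}
so again the $\partial\Omega$ piece drops out. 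Therefore $\langle \varphi,\Lambda_0 f\rangle_{\partial D}=\langle f,\Lambda_0\varphi\rangle_{\partial D}$, i.e.\ $\Lambda_0^\top=\Lambda_0$, and subtracting gives $(\Lambda-\Lambda_0)^\top = \Lambda-\Lambda_0$.

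For the dense range conclusion I would invoke the standard annihilator identity
\begin{equation*}
\overline{\mathrm{Range}(\Lambda-\Lambda_0)} \,=\, {}^{a}\mathrm{Null}\bigl((\Lambda-\Lambda_0)^\top\bigr),
\end{equation*}
which was already used to deduce dense range of $G$ in the text. Combined with the just-proved identity $(\Lambda-\Lambda_0)^\top=\Lambda-\Lambda_0$ and the injectivity statement in Corollary \ref{compact/injective}, the right-hand side is ${}^{a}\{0\} = H^{-1/2}(\partial D)$, giving dense range.

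The step that requires the most care is the $\partial\Omega$ contribution in the impedance case: one must use the inner boundary condition \emph{symmetrically} for both $u_{0,f}$ and $u_{0,\varphi}$ so that the $\gamma$-weighted terms cancel (this is exactly where the self-adjointness of the impedance operator enters). Apart from that, the argument is a direct application of Green's formula and the Fredholm-type annihilator identity, and relies on no ingredient beyond what has been established earlier in the paper.
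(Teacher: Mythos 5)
Your proof is correct and follows essentially the same route as the paper: Green's identity on $D$ and on $D\setminus\overline{\Omega}$ to kill (or cancel, via the impedance condition) the $\partial\Omega$ contribution, giving symmetry, and then the annihilator identity together with the injectivity of Corollary \ref{compact/injective} for dense range. The only cosmetic difference is that the paper uses Green's first identity to exhibit an explicitly symmetric bilinear form for the difference $\Lambda-\Lambda_0$, whereas you apply Green's second identity to $\Lambda$ and $\Lambda_0$ separately and subtract; the content is the same.
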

\begin{proof}
To begin, we let $f_j \in H^{1/2}(\partial D)$ where $u^{(j)} \in H^1(D)$ and $u_0^{(j)} \in H^1(D\setminus \overline{\Omega})$ are the unique solutions to \eqref{healthy} and \eqref{defective}, respectively for $j=1,2$. We now consider 
$$\big\langle f_1 , (\Lambda - \Lambda_0) f_2 \big\rangle_{\partial D}$$
where again $\langle \cdot \, ,\cdot  \rangle_{ \partial D}$ denotes the dual pairing between $H^{1/2}(\partial D)$ and $H^{-1/2}(\partial D)$. By definition we have that 
\begin{align*}
\big\langle f_1 , (\Lambda - \Lambda_0) f_2 \big\rangle_{\partial D}&= \int\limits_{\partial D} f_1\,  \partial_{\nu}u^{(2)} -  f_1 \, \partial_{\nu}u^{(2)}_0 \, \text{d}s= \int\limits_{\partial D} u^{(1)} \partial_{\nu} u^{(2)} \, ds -\int\limits_{\partial D} u_0^{(1)} \partial_{\nu} u_0^{(2)} \, \text{d}s 
\end{align*}
Now, by Green's 1st Theorem 
\begin{align*}
\big\langle f_1 , (\Lambda - \Lambda_0) f_2 \big\rangle_{\partial D}= \int\limits_{ D} \grad  u^{(1)} \cdot  \grad  u^{(2)} \, \text{d}x - \int\limits_{ D \setminus \overline{\Omega} } \grad u_0^{(1)} \cdot  \grad  u_0^{(2)} \, \text{d}x + \int\limits_{\partial \Omega } u_0^{(1)} \partial_{\nu} u_0^{(2)} \, \text{d}s.
\end{align*}
For the Dirichlet boundary condition on $\partial \Omega$ we obtain 
$$\big\langle f_1 , (\Lambda - \Lambda_0) f_2 \big\rangle_{\partial D}= \int\limits_{ D} \grad  u^{(1)} \cdot  \grad  u^{(2)} \, \text{d}x - \int\limits_{ D \setminus \overline{\Omega} } \grad u_0^{(1)} \cdot \grad  u_0^{(2)} \, \text{d}x$$
and for the Impedance boundary condition conclude that 
$$\big\langle f_1 , (\Lambda - \Lambda_0) f_2 \big\rangle_{\partial D}= \int\limits_{ D} \grad  u^{(1)} \cdot  \grad  u^{(2)} \, \text{d}x - \int\limits_{ D \setminus \overline{\Omega} } \grad u_0^{(1)} \cdot \grad  u_0^{(2)} \, \text{d}x - \int\limits_{\partial \Omega } \gamma(x) \, u_0^{(1)} u_0^{(2)} \, \text{d}s.$$
Therefore, we have that the right hand side of the above expressions are symmetric bilinear forms and therefore $(\Lambda - \Lambda_0)$ is symmetric. By Corollary  \ref{compact/injective} we can conclude that $(\Lambda - \Lambda_0)$ has a dense range. 
\end{proof}

We define $\mathbb{G} (x,z)$ as the Green's function for $D$ which is the solution to
$$ \Delta  \mathbb{G} (\cdot \, , \, z) =- \delta(\cdot - z)  \quad \text{in} \quad  D \quad  \text{and} \quad  \mathbb{G} (\cdot \, ,  z) =0 \quad \text{on} \quad  \partial D.$$
We now connect the support of the inclusion $\Omega$ to the range of the operator $G$. 

\begin{theorem}\label{range}
$\partial_{\nu} \mathbb{G} (\cdot \, ,  z) \big|_{\partial D} \in \text{Range}(G)$ if and only if $z \in \Omega$.  
\end{theorem}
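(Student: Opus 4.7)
The proof has two directions, and the easy one is the forward implication. The plan is as follows.

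For the ``if'' direction, suppose $z \in \Omega$. Then $\mathbb{G}(\cdot,z)$ is harmonic in $D\setminus\overline{\Omega}$ because its only singularity lies inside $\Omega$, and by definition it vanishes on $\partial D$. Consequently, if I set $h := \partial_{\nu} \mathbb{G}(\cdot,z)\big|_{\partial \Omega} \in H^{-1/2}(\partial \Omega)$, the function $\mathbb{G}(\cdot,z)\big|_{D\setminus\overline{\Omega}}$ is precisely the unique solution $w$ of \eqref{equ-w} with this boundary data. Therefore $Gh = \partial_{\nu} w\big|_{\partial D} = \partial_{\nu} \mathbb{G}(\cdot,z)\big|_{\partial D}$, which gives membership in $\text{Range}(G)$.

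For the ``only if'' direction, I argue by contradiction. Assume $z \notin \Omega$ and that there exists $h \in H^{-1/2}(\partial \Omega)$ with $Gh = \partial_{\nu} \mathbb{G}(\cdot,z)\big|_{\partial D}$, and let $w \in H^1(D \setminus \overline{\Omega})$ be the associated solution of \eqref{equ-w}. Then the difference $\psi := w - \mathbb{G}(\cdot,z)$ is harmonic in a tubular neighborhood of $\partial D$ inside $D\setminus \overline{\Omega}$ (chosen small enough to avoid $z$ in the case $z\in D\setminus\overline{\Omega}$, which is possible since $\text{dist}(\partial D,\partial \Omega)>0$), and it has vanishing Cauchy data on $\partial D$ because both functions vanish on $\partial D$ and their normal derivatives agree by hypothesis. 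Unique continuation for harmonic functions then forces $w \equiv \mathbb{G}(\cdot,z)$ on the connected component of $(D\setminus\overline{\Omega})\setminus\{z\}$ containing a piece of $\partial D$; by connectedness of $D\setminus\overline{\Omega}$ (outside a neighborhood of $z$) this identification propagates up to any small punctured ball around $z$.

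The contradiction comes from regularity near the pole $z$. Since $w \in H^1(D\setminus\overline{\Omega})$ it is, in particular, in $H^1$ of a neighborhood of $z$ relative to $D\setminus\overline{\Omega}$, whereas $\mathbb{G}(\cdot,z)$ has the fundamental-solution singularity at $z$ and fails to lie in $H^1$ of any open set containing $z$ (in two dimensions $|\nabla \mathbb{G}|\sim 1/|x-z|$, and in three dimensions $|\nabla \mathbb{G}|\sim 1/|x-z|^2$, neither of which is square-integrable). This rules out both $z \in D\setminus \overline{\Omega}$ and the borderline case $z \in \partial \Omega$, completing the proof.

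The main obstacle is the delicate bookkeeping in the converse, in particular verifying that the unique-continuation step can reach an arbitrarily small punctured neighborhood of $z$ when $\Omega$ may have several components and when $z$ sits on $\partial\Omega$; once that is handled, the $H^1$ versus non-$H^1$ contradiction is immediate.
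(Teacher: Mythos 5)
Your proof is correct and follows essentially the same route as the paper: the forward direction identifies $\mathbb{G}(\cdot,z)$ itself as the solution of \eqref{equ-w}, and the converse uses vanishing Cauchy data on $\partial D$ plus unique continuation (Holmgren) to force $w = \mathbb{G}(\cdot,z)$ away from $z$, then derives a contradiction from the singularity at $z$. The only difference is cosmetic: the paper contrasts boundedness of $w_z$ near $z$ with the blow-up of $\mathbb{G}$, while you contrast $H^1$ membership with non-membership, which also covers the borderline case $z\in\partial\Omega$ that the paper leaves implicit.
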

\begin{proof}
Notice that for $z \in \Omega$, $\mathbb{G}( \cdot \, ,z) \in H^1(D \setminus \overline{\Omega})$ is harmonic in the annular region and satisfies \eqref{equ-w} with $h_z=\partial_{\nu} \mathbb{G} (\cdot \, ,  z)$ on $\partial \Omega$. It is clear that $Gh_z=\partial_{\nu} \mathbb{G} (\cdot \, ,  z) \big|_{\partial D}$. 

Now, assume that $\partial_{\nu} \mathbb{G} (\cdot \, ,  z) \big|_{\partial D} \in \text{Range}(G)$  for some $z \in D \setminus \overline{\Omega}$. Therefore, we can conclude that there is a $w_z$ solving \eqref{equ-w} such that 
$$\partial_{\nu} w_z=\partial_{\nu} \mathbb{G} (\cdot \, ,  z) \quad \text{on} \quad {\partial D}.$$
We now define $U_z=w_z-\mathbb{G}(\cdot \, , z)$ which satisfies 
$$\Delta  U_z =0 \quad \text{in} \quad D \setminus \big(\overline{\Omega} \cup \{z\} \big) \quad  U_z =\partial_{\nu} U_z=0 \quad \text{on} \quad \partial D.$$
Holmgren's Theorem implies that $w_z= \mathbb{G} (\cdot \, ,  z)$ in $D \setminus \big(\overline{\Omega} \cup \{z\} \big)$, but interior elliptic regularity gives that $w_z$ is bounded as $x \rightarrow z$ where as $|\mathbb{G} (x,z)| \rightarrow \infty$ as $x \rightarrow z$, proving the claim by contradiction. 
\end{proof}

Next we turn our attention to showing that the linear sampling method can be applied as an inversion method for our inverse problem. The linear  sampling method was first derived in \cite{CK} as a way to reconstruct impenetrable obstacles using time-harmonic acoustic waves. We now show that a sampling algorithm can be used to reconstruct the inclusion. 

From the analysis given in this section we now have all we need to derive sampling method for reconstructing $\Omega$. To this end, consider the ill-posed `current-gap' equation
\begin{equation}
(\Lambda - \Lambda_0) f_z = \phi_z  \quad \text{for} \quad z \in D \quad \text{where} \quad  \phi_z = \partial_{\nu} \mathbb{G} (\cdot \, ,  z) \big|_{\partial D}. \label{PGE} 
\end{equation}
By Theorem \ref{dense-range} we have that for all $z \in D$ we have that there exists an approximating sequence $\big\{f_{z , \ep} \big\}_{\ep >0}$ of solutions to \eqref{PGE} where 
$$\big\| (\Lambda - \Lambda_0) f_{z , \ep} - \phi_z  \big\|_{H^{-1/2}(\partial D)} \longrightarrow 0 \quad \text{as } \ep \rightarrow 0.$$
Now assume that $\| f_{z , \ep} \|_{H^{1/2}(\partial D)}$ is bounded as $\ep \rightarrow 0$. Since $f_{z , \ep} \in H^{1/2}(\partial D)$ is a bounded sequence we have that there is a weakly convergent subsequence (still denote with $\ep$) such that $f_{z , \ep} \weakc f_{z,0}$ as $\ep \to 0$. Since $(\Lambda - \Lambda_0)$ is compact we can conclude that 
$$ (\Lambda - \Lambda_0) f_{z , \ep}  \longrightarrow (\Lambda - \Lambda_0) f_{z,0} \quad \text{and} \quad (\Lambda - \Lambda_0) f_{z , \ep} \longrightarrow \phi_z  \quad \text{as} \quad \ep \rightarrow 0 \quad \text{ in} \,\,\,H^{-1/2}(\partial D).$$
By the decomposition given in Theorem \ref{decomp} this implies that $\phi_z \in$ Range$(G)$, which is a contradiction of Theorem \ref{range} if $z \notin \Omega$. 

From the above analysis we have derived a sampling method for recovering the unknown inclusion $\Omega$ by constructing approximate solutions to \eqref{PGE}.

\begin{theorem}\label{LSM}
Let $\phi_z = \partial_{\nu} \mathbb{G} (\cdot \, ,  z) \big|_{\partial D}$. Then for any sequence 
$\big\{f_{z , \ep} \big\}_{\ep >0} \in H^{1/2}(\partial D)$ that  is an approximating solution of \eqref{PGE} such that 
$$\big\| (\Lambda - \Lambda_0) f_{z , \ep} - \phi_z  \big\|_{H^{-1/2}(\partial D)} \longrightarrow 0 \quad \text{as } \ep \rightarrow 0$$
then $\| f_{z , \ep} \|_{H^{1/2}(\partial D)} \longrightarrow \infty$ as $\ep \rightarrow 0$ for all $z \notin \Omega$. 
\end{theorem}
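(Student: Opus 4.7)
The plan is to prove the claim by contradiction, promoting to a formal argument the heuristic sketched in the paragraph immediately preceding the theorem. The three tools already on the table are: compactness of $(\Lambda - \Lambda_0)$ (Corollary \ref{compact/injective}), the factorization $(\Lambda - \Lambda_0) = G(L-L_0)$ (Theorem \ref{decomp}), and the range-support dictionary $\phi_z \in \text{Range}(G) \iff z \in \Omega$ (Theorem \ref{range}). Note that dense range of $(\Lambda - \Lambda_0)$ (Theorem \ref{dense-range}) is what guarantees that approximating sequences $\{f_{z,\ep}\}$ exist in the first place, so the statement is not vacuous.

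First I would fix $z \notin \Omega$ and suppose, toward a contradiction, that along some subsequence (still indexed by $\ep$) the norms $\|f_{z,\ep}\|_{H^{1/2}(\partial D)}$ remain bounded while $(\Lambda - \Lambda_0)f_{z,\ep} \to \phi_z$ in $H^{-1/2}(\partial D)$. By reflexivity of the Hilbert space $H^{1/2}(\partial D)$, a further subsequence (again relabeled) satisfies $f_{z,\ep} \weakc f_{z,0}$ for some $f_{z,0} \in H^{1/2}(\partial D)$.

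The key step is then to apply compactness: a compact linear operator maps weakly convergent sequences to norm-convergent sequences, hence $(\Lambda - \Lambda_0)f_{z,\ep} \to (\Lambda - \Lambda_0)f_{z,0}$ strongly in $H^{-1/2}(\partial D)$. Matching this limit against the approximation hypothesis and invoking uniqueness of limits in $H^{-1/2}(\partial D)$ yields the exact identity $(\Lambda - \Lambda_0)f_{z,0} = \phi_z$. Feeding this through the factorization of Theorem \ref{decomp} rewrites it as $\phi_z = G\bigl((L-L_0)f_{z,0}\bigr)$, placing $\phi_z \in \text{Range}(G)$. But Theorem \ref{range} then forces $z \in \Omega$, contradicting the initial choice, and we conclude $\|f_{z,\ep}\|_{H^{1/2}(\partial D)} \to \infty$.

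I do not foresee a serious obstacle; the proof is a routine "compactness plus contradiction" argument once the three structural ingredients above are in place. The only points that need some care are (i) keeping the subsequence extractions straight and (ii) noting that getting a contradiction along any bounded subsequence suffices to rule out boundedness of the full sequence, which is exactly the divergence statement required.
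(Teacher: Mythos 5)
Your argument is correct and is essentially the paper's own proof: the paper establishes Theorem \ref{LSM} in the paragraph just before its statement by exactly this route --- assume boundedness, extract a weakly convergent subsequence, use compactness of $(\Lambda - \Lambda_0)$ to pass to the strong limit $(\Lambda - \Lambda_0) f_{z,0} = \phi_z$, then invoke the factorization of Theorem \ref{decomp} and the range characterization of Theorem \ref{range} to reach a contradiction for $z \notin \Omega$. Your added care about subsequences and about why a contradiction along a bounded subsequence yields divergence of the full sequence is a small but welcome tightening of the paper's informal write-up.
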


Notice that Theorem \ref{LSM} says that equation \eqref{PGE} is not ``approximately solvable" provided that $z \notin \Omega$ i.e. there is no sequence of approximate solutions whose (weak) limit satisfies \eqref{PGE}. Since we assume that $(\Lambda - \Lambda_0)$ and $\phi_z$ are known we can use a regularization strategy to find an approximate  solution to the current-gap equation \eqref{PGE}. Also notice that it does not matter if one has the Dirichlet or impedance boundary condition on $\partial \Omega$, Theorem \ref{LSM} is valid for either case. One can easily modify the analysis in this section to show that Theorem \ref{LSM} is also valid for the perfectly insulated inclusion where $\mathcal{B}(u_0)=\partial_\nu u_0$. This gives that the sampling method is robust in the fact that it can be applied for multiple boundary conditions.  The inversion algorithm for reconstructing the boundary $\partial \Omega$ is as follows: choose a grid of points in $D$, for each grid point `solve' \eqref{PGE} via a regularization strategy, plot the $W(z)= \| f_{z , \ep} \|^{-1}_{H^{1/2}(\partial D)}$ where $ f_{z , \ep}$ is the regularized solution to \eqref{PGE} and set the reconstruction ${\partial \Omega_\delta}=\big\{ z \in D \, \, : \, \,  W(z) = \delta \ll1  \big\}$.

\section{ Integral Equations for the Inverse Impedance Problem}\label{BIE}

In this section, we will derive a non-iterative method for reconstructing the impedance parameter $\gamma (x)$. Even though we focus on the case of the impedance boundary condition the reconstruction method presented in this section works for determining if the inclusion is perfectly conducting/insulated. To this end, we consider the inverse problem of reconstructing the boundary impedance from the knowledge of $\Lambda_0 (\gamma)$. We assume that the boundary $\partial \Omega$ is known and that the DtN mapping which maps $u_0 = f$ on $\partial D$ to $\partial_{\nu}  u_0 = g$ on $\partial D$ is given on some subset of $H^{1/2}(\partial D)$. The idea is to use the knowledge of the Cauchy data on $\partial D$ to recover the corresponding Cauchy data on $\partial \Omega$. Once we have the Cauchy data on $\partial \Omega$ of $u_0$ the impedance parameter can be determined by solving $\partial_{\nu}  u_0 +\gamma(x) u_0=0$ on $\partial \Omega$. 

We begin this section by proving a uniqueness for the inverse problem. Since we assume that the DtN mapping is known we wish to prove that the {\it inverse impedance} and {\it inverse shape} problems admits a unique solution. Since we assume that we have an infinite data set we should be able to prove uniqueness for sufficiently less regularity than is needed in \cite{unique-imp}. To do so, we first need the following Theorem. 

\begin{theorem}\label{dense-set}
The set $$\, {\mathcal U} := \Big\{ u_0 \big|_{\partial \Omega} \, \, : \, \,  u_0  \in H^1( D \setminus \overline{\Omega})  \, \, \text{  solving  } \eqref{defective} \, \,  \text{ for all } \,  \, f \in  H^{1/2}(\partial D) \Big\}$$
is a dense subspace of $L^2(\partial \Omega)$. 
\end{theorem}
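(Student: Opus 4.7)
The plan is to prove density by a duality (Hahn--Banach) argument: it suffices to show that the only $\varphi \in L^2(\partial\Omega)$ satisfying
$$\int_{\partial\Omega} \varphi \, u_0 \, \mathrm{d}s = 0 \quad \text{for every } u_0 \text{ solving \eqref{defective} with } f \in H^{1/2}(\partial D)$$
is $\varphi \equiv 0$. Given such a $\varphi$, I would introduce the auxiliary ``adjoint'' boundary value problem for $v \in H^1(D \setminus \overline{\Omega})$:
$$\Delta v = 0 \quad \text{in } D \setminus \overline{\Omega}, \qquad v\big|_{\partial D} = 0, \qquad \big(\partial_\nu v + \gamma v\big)\big|_{\partial \Omega} = \varphi.$$
This problem is well-posed by the same Lax--Milgram argument as for \eqref{defective}: the bilinear form $\int_{D\setminus\overline{\Omega}} \nabla v \cdot \nabla w \, \mathrm{d}x + \int_{\partial\Omega} \gamma v w \, \mathrm{d}s$ is coercive on the closed subspace of $H^1(D\setminus\overline{\Omega})$ with zero trace on $\partial D$, and $\varphi \in L^2(\partial\Omega) \hookrightarrow H^{-1/2}(\partial\Omega)$ provides an admissible right-hand side.

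Next I would apply Green's second identity to the pair $(u_0, v)$ on $D \setminus \overline{\Omega}$. Both functions are harmonic, so the bulk integral vanishes and one is left with a sum of boundary pairings on $\partial D$ and on $\partial \Omega$ (with $\nu$ pointing into $\Omega$, consistent with the paper's convention). On $\partial D$ the term involving $v$ drops because $v = 0$, leaving $-\int_{\partial D} f \, \partial_\nu v \, \mathrm{d}s$. On $\partial\Omega$ one substitutes $\partial_\nu u_0 = -\gamma u_0$ and $\partial_\nu v = \varphi - \gamma v$; the $\gamma u_0 v$ contributions cancel and what remains is exactly $-\int_{\partial\Omega} \varphi \, u_0 \, \mathrm{d}s$, which is zero by our assumption on $\varphi$. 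Therefore
$$\int_{\partial D} f \, \partial_\nu v \, \mathrm{d}s = 0 \quad \text{for all } f \in H^{1/2}(\partial D),$$
so $\partial_\nu v = 0$ on $\partial D$ in the $H^{-1/2}$ sense.

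To finish, $v$ has vanishing Cauchy data on $\partial D$ and is harmonic in $D \setminus \overline{\Omega}$, so unique continuation (Holmgren) forces $v \equiv 0$ in $D \setminus \overline{\Omega}$. Substituting into the impedance condition on $\partial \Omega$ gives $\varphi = \partial_\nu v + \gamma v = 0$, as required. The main obstacle is a bookkeeping one rather than a conceptual one: ensuring that the sign of $\nu$ and the impedance term are handled consistently so that the cancellation in the $\partial \Omega$ boundary integral actually occurs. Provided $\gamma \in L^\infty_+(\partial \Omega)$ is kept in both the primal and adjoint problems, the two $\gamma u_0 v$ contributions have opposite signs and cancel, which is what makes this classical adjoint trick go through with only $L^2$ regularity on $\varphi$ and no extra smoothness on $\gamma$ beyond what is already assumed.
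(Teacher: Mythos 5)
Your proposal is correct and follows essentially the same route as the paper: the same duality argument, the same adjoint impedance problem for $v$ with data $\varphi$ on $\partial\Omega$, the same application of Green's second identity producing the cancellation of the $\gamma u_0 v$ terms, and the same conclusion via vanishing Cauchy data on $\partial D$ and unique continuation. The only (welcome) addition is your explicit Lax--Milgram justification of the well-posedness of the adjoint problem, which the paper takes for granted.
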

\begin{proof}
To prove the claim let $\phi \in L^2(\partial \Omega)$ and assume that 
$$\int\limits_{\partial \Omega} u_0 \phi \, \text{d}s = 0 \quad \text{ for all } \quad f \in  H^{1/2}(\partial D).$$
Now let $v \in H^1(D \setminus \overline{\Omega})$ be the unique solution to 
$$ \Delta v=0 \quad \text{in} \quad D\setminus \overline{\Omega} \quad \text{ with } \quad v=0 \,\, \text{ on} \quad \partial D \quad \text{ and } \quad \partial_{\nu} v+ \gamma v= \phi \,\, \text{ on } \quad \partial \Omega.$$
Using Green's 2nd Theorem we have that 
\begin{align*}
0& = \int\limits_{D\setminus \overline{\Omega} }  u_0 \Delta v - v\Delta u_0 \, \text{d}x  = \int\limits_{\partial D} u_0 \partial_\nu v - v \partial_\nu u_0 \, \text{d}s + \int\limits_{\partial \Omega} u_0 \partial_\nu v - v \partial_\nu u_0 \, \text{d}s.
\end{align*}
Appealing to the boundary conditions for both $u_0$ and $v$ gives 
\begin{align*}
-\int\limits_{\partial D} f \partial_\nu v \, \text{d}s & = \int\limits_{\partial \Omega} u_0 ( \partial_\nu v + \gamma v) \, \text{d}s = \int\limits_{\partial \Omega} u_0 \phi \, ds = 0. 
\end{align*}
This implies that $\partial_\nu v=0$ on $\partial D$ since it is orthogonal to all $f \in  H^{1/2}(\partial D)$. Since $v$ has zero Cauchy data on $\partial D$ we have that $v=0$ in $D \setminus \overline{\Omega}$ and the Trace Theorem gives that $\phi = 0$. Proving the claim. 
\end{proof}

Notice that Theorem \ref{dense-set} hold true for any dense subset of $H^{1/2}(\partial D)$. We can now prove that the impedance is uniquely determined by the knowledge of the DtN mapping on any dense subset of $H^{1/2}(\partial D)$. 

\begin{theorem} \label{unique} 
The DtN mapping $\Lambda_0 : H^{1/2}(\partial D) \longmapsto H^{-1/2}(\partial D)$ uniquely determines  impedance parameter $\gamma (x) \in L^{\infty}_{+}(\partial \Omega)$. 
\end{theorem}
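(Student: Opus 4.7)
The plan is to proceed by contradiction in the standard uniqueness style. Suppose $\gamma_1, \gamma_2 \in L^\infty_+(\partial \Omega)$ give rise to the same DtN map, i.e. $\Lambda_0(\gamma_1) = \Lambda_0(\gamma_2)$. For each $f \in H^{1/2}(\partial D)$, let $u_0^{(1)}$ and $u_0^{(2)}$ be the associated solutions to \eqref{defective} with impedance parameters $\gamma_1$ and $\gamma_2$, respectively. The assumption that the DtN maps coincide means that $u_0^{(1)}$ and $u_0^{(2)}$ share full Cauchy data $(f, \Lambda_0 f)$ on $\partial D$. Because their difference is harmonic in $D \setminus \overline{\Omega}$ with zero Cauchy data on $\partial D$, an application of Holmgren/unique continuation (exactly as used in Theorems \ref{G} and \ref{range}) yields $u_0^{(1)} = u_0^{(2)}$ throughout $D\setminus\overline{\Omega}$. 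Taking traces then gives $u_0^{(1)}|_{\partial \Omega} = u_0^{(2)}|_{\partial \Omega}$ and $\partial_\nu u_0^{(1)}|_{\partial \Omega} = \partial_\nu u_0^{(2)}|_{\partial \Omega}$.

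Next I would subtract the two impedance conditions on $\partial \Omega$. Since $\partial_\nu u_0^{(j)} + \gamma_j u_0^{(j)} = 0$ on $\partial \Omega$ for $j = 1, 2$ and the trace/normal-trace parts agree, one obtains the pointwise identity
\begin{equation*}
\bigl(\gamma_1(x) - \gamma_2(x)\bigr)\, u_0(x) = 0 \qquad \text{a.e. on } \partial \Omega,
\end{equation*}
where $u_0 := u_0^{(1)}|_{\partial \Omega} = u_0^{(2)}|_{\partial \Omega}$. This identity holds for every element of the trace set $\mathcal{U}$ introduced in Theorem \ref{dense-set}.

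Now comes the step I expect to be the main technical point: upgrading the pointwise identity to $\gamma_1 = \gamma_2$. Set $\phi := \gamma_1 - \gamma_2 \in L^\infty(\partial \Omega)$. By Theorem \ref{dense-set}, $\mathcal{U}$ is dense in $L^2(\partial \Omega)$. For any $v \in L^2(\partial \Omega)$, choose $u_n \in \mathcal{U}$ with $u_n \to v$ in $L^2(\partial \Omega)$. Since $\phi \in L^\infty(\partial \Omega)$, multiplication by $\phi$ is continuous on $L^2(\partial \Omega)$, so $\phi u_n \to \phi v$ in $L^2$. But $\phi u_n = 0$ a.e.\ for every $n$, hence $\phi v = 0$ a.e.\ for every $v \in L^2(\partial \Omega)$. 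Testing with $v = \chi_{\{\phi > 0\}}$ and $v = \chi_{\{\phi < 0\}}$ (measurable because $\phi$ is) forces $\phi = 0$ almost everywhere, giving $\gamma_1 = \gamma_2$.

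The main obstacle is really the transition from an identity of the form $\phi u_0 = 0$ for $u_0$ ranging over a particular solution family to the conclusion $\phi = 0$. This is where the previously established density result Theorem \ref{dense-set} does essentially all the work; without it one would need much stronger regularity or special choices of boundary data (as in \cite{unique-imp}). Once one has density in $L^2(\partial \Omega)$, the $L^\infty$ regularity of $\gamma_j$ makes the multiplicative argument a direct continuity computation, and no further regularity of $\partial \Omega$ or of the impedance is required.
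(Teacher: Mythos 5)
Your proposal is correct and follows essentially the same route as the paper: unique continuation from equal Cauchy data on $\partial D$ gives $u_0^{(1)}=u_0^{(2)}$, subtracting the impedance conditions gives $(\gamma_1-\gamma_2)u_0=0$ on $\partial\Omega$, and Theorem \ref{dense-set} finishes the argument. The only cosmetic difference is in the last step, where the paper integrates the pointwise identity to conclude $\gamma_1-\gamma_2$ is orthogonal to the dense set $\mathcal{U}$, while you pass to the limit through the bounded multiplication operator; both are valid and equivalent uses of the density.
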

\begin{proof}
Assume that $\Lambda_{0 }(\gamma_1) = \Lambda_{0 } ( \gamma_2)$ then let $u_0^{(j)}$ be the solution to \eqref{defective} with impedance $\gamma_j$ for $j=1,2$. Therefore, we have that $u_0^{(1)}$ and $u_0^{(2)}$ have the same Cauchy data on $\partial D$ which implies that $u_0 = u_0^{(1)}=u_0^{(2)}$ in $D \setminus \overline{\Omega}$ by unique continuation. We can conclude that
$$ \partial_{\nu} u_0 + \gamma_1 u_0 =0 \quad \text{and} \quad  \partial_{\nu} u_0 + \gamma_2 u_0  =0 \quad \text{ on } \, \, \partial \Omega. $$
Subtracting the impedance conditions implies that $(\gamma_1 - \gamma_2) u_0 = 0$ on $\partial \Omega$ for all $f \in  H^{1/2}(\partial D)$. We conclude that $(\gamma_1 - \gamma_2)$ is orthogonal to the set $\mathcal{U}$ and is therefore zero a.e. on $\partial D$ proving the claim. 
\end{proof}

\begin{remark}
The above proof is carried out in a variational setting so the uniqueness holds for the case where that Laplacian is replaced with $\grad \cdot A(x) \grad$ where the symmetric coefficient matrix satisfies the same assumptions as in Remark \ref{by-the-way}. 
\end{remark}

We now turn our attention to deriving an inversion method for determining $\gamma (x)$ from the knowledge of the DtN mapping $\Lambda_0$ and $\partial \Omega$. Our inversion method requires us to write the electrostatic potential function $u_0$ in terms of boundary integral operators. To this end, we adopt the notation $\partial D= \Gamma_{\text{m}}$ (i.e. the measurements boundary) and $\partial \Omega= \Gamma_{\text{i}}$ (i.e. the impedance boundary). Therefore, since both boundaries are assumed to be $C^2$ we define 
$$\mathcal{D}_{\text{m}}: H^{1/2}(\Gamma_{\text{m}}) \mapsto H^1(D) \cup H^1_{loc}(\R^d \setminus \overline{D}) \quad \text{ and } \quad \widetilde{ \mathcal{D}}_{\text{i}}: H^{1/2}(\Gamma_{\text{i}}) \mapsto H^1(\Omega) \cup H^1_{loc}(\R^d \setminus \overline{\Omega})$$
by the boundary integral operators 
$$ (\mathcal{D}_{\text{m}}\,  \varphi)(x) = 2\int\limits_{\Gamma_{\text{m}} } \varphi (y) \partial_{\nu(y)} \Phi(x,y) \, \text{d}s_y \quad \text{ for } \, \, x \in \R^d \setminus \Gamma_{\text{m}}$$
and 
$$(\widetilde{\mathcal{D}}_{\text{i}} \, \psi)(x) = 2 \int\limits_{\Gamma_{\text{i}} } \psi (y) \big[ \partial_{\nu(y)} \Phi(x,y) + |x|^{2-d} \big]\, \text{d}s_y \quad \text{ for } \, \, x \in \R^d \setminus \Gamma_{\text{i}}.$$ 
Recall that $\Phi(x,z)$ is the fundamental solution to Laplace's equation in $\R^d$ given by 
$$\Phi(x,y)= - \frac{ 1 }{2 \pi } \ln | x-y | \,  \text{ in } \, \R^2  \quad \text{and} \quad  \Phi(x,y)=\frac{1}{4 \pi} \frac{1}{| x-y |}  \, \text{ in } \, \R^3.$$
We refer to \cite{int-equ-book,McLean} for the mapping properties and analysis of the above boundary integral operators.

Since the double layer boundary integral operators satisfy Laplace's equation in $D \setminus \overline{\Omega}$ we make the ansatz that 
\begin{align}
u_0 = (\mathcal{D}_{\text{m}} \, \varphi )(x) + (\widetilde{\mathcal{D}}_{\text{i}} \, \psi )(x) \quad \text{ for } \, \, x\in D \setminus \overline{\Omega}. \label{int-rep}
\end{align}
Using the jump relations for the double layer potentials in \eqref{int-rep} we have that 
\begin{subequations}
\begin{align}
( I- K_{\text{mm}} ) \, \varphi - \widetilde{K}_{\text{im}} \, \psi&= -f \quad \text{ on } \, \, \Gamma_\text{m}  \label{BIE1}\\
K_{\text{mi}} \, \varphi + (I+ \widetilde{K}_{\text{ii}}) \, \psi &= u_0 \big|_{\Gamma_\text{i}} \quad \text{ on } \, \, \Gamma_\text{i}\label{BIE2}
\end{align}
\end{subequations}
where 
$$ K_{\text{pq}} \varphi = (\mathcal{D}_{\text{p}} \varphi )(x) \quad \text{and} \quad \widetilde{K}_{\text{pq}} \psi= (\widetilde{\mathcal{D}}_{\text{p}} \psi )(x) \quad \text{ for } \, \, x \in \Gamma_\text{q}$$ 
with the index $\text{p,q}=$m,i. Notice that we have used that $u_0=f$ on $\Gamma_{\text{m}}$ in equation \eqref{BIE1}. In order to proceed we must show that the system of integral equations in \eqref{BIE1}-\eqref{BIE2} is well-posed. To this end, define the operator 
$$ {\mathcal A} = \begin{bmatrix}  ( I- K_{\text{mm}} )  & - \widetilde{K}_{\text{im}} \\ K_{\text{mi}}   &  (I+ \widetilde{K}_{\text{ii}})  \end{bmatrix} : H^{1/2}(\Gamma_{\text{m}})  \times H^{1/2}(\Gamma_{\text{i}}) \mapsto   H^{1/2}(\Gamma_{\text{m}})  \times H^{1/2}(\Gamma_{\text{i}})$$
which represents the integral operator associated with \eqref{BIE1}-\eqref{BIE2}. 

\begin{theorem}
The operator  ${\mathcal A} : H^{1/2}(\Gamma_{\text{m}})  \times H^{1/2}(\Gamma_{\text{i}}) \mapsto   H^{1/2}(\Gamma_{\text{m}})  \times H^{1/2}(\Gamma_{\text{i}})$ has a bounded inverse.
\end{theorem}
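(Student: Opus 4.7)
The plan is a standard Fredholm-alternative argument: show $\mathcal{A}$ is a compact perturbation of the identity, reduce bounded invertibility to injectivity, and then prove injectivity by recovering $(\varphi,\psi)$ from the double-layer ansatz.

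For the Fredholm structure, I would observe that on $H^{1/2}(\Gamma_{\text{m}})\times H^{1/2}(\Gamma_{\text{i}})$ the operator $\mathcal{A}$ splits as $\mathrm{diag}(I,I)$ plus the block of remaining boundary integral operators. The diagonal entries $K_{\text{mm}}$ and $\widetilde{K}_{\text{ii}}$ are double-layer operators on a single $C^2$ boundary, hence compact on $H^{1/2}$ by the standard mapping properties recorded in \cite{int-equ-book,McLean}. The off-diagonal entries $\widetilde{K}_{\text{im}}$ and $K_{\text{mi}}$ have $C^{\infty}$ kernels because $\dist(\Gamma_{\text{m}},\Gamma_{\text{i}})>0$, so they are compact (in fact smoothing of arbitrary order). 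Thus $\mathcal{A}$ is Fredholm of index zero, and by the bounded inverse theorem it suffices to establish injectivity.

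For injectivity, let $(\varphi,\psi)\in\ker\mathcal{A}$ and define the harmonic ansatz $u(x)=(\mathcal{D}_{\text{m}}\varphi)(x)+(\widetilde{\mathcal{D}}_{\text{i}}\psi)(x)$ on $\R^d\setminus(\Gamma_{\text{m}}\cup\Gamma_{\text{i}})$, which is harmonic in each of the three regions $\Omega$, $D\setminus\overline{\Omega}$, and $\R^d\setminus\overline{D}$. The jump relations for the double-layer potentials, unaffected by the $|x|^{2-d}$ correction in $\widetilde{\mathcal{D}}_{\text{i}}$ since it is continuous across $\Gamma_{\text{i}}$, together with $\mathcal{A}(\varphi,\psi)=0$, force the Dirichlet traces of $u$ on $\Gamma_{\text{m}}$ and on $\Gamma_{\text{i}}$ taken from the annular side $D\setminus\overline{\Omega}$ to vanish. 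Well-posedness of the Dirichlet problem in the annulus then gives $u\equiv 0$ there. Using continuity of the normal derivative of a double-layer potential across its carrier, $\partial_{\nu}u$ is zero on both $\Gamma_{\text{m}}$ and $\Gamma_{\text{i}}$ from the adjacent regions; harmonic continuation in $\Omega$ and in $\R^d\setminus\overline{D}$, combined with the decay enforced by the modified kernel, then yields $u\equiv 0$ in those regions as well. The jump relations finally read off $\varphi=0$ on $\Gamma_{\text{m}}$ and $\psi=0$ on $\Gamma_{\text{i}}$.

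The main obstacle I expect is the exterior uniqueness step in dimension two, where the plain double-layer potential does not automatically vanish at infinity and could formally support a constant mode. The correction $|x|^{2-d}$ in $\widetilde{\mathcal{D}}_{\text{i}}$ — equal to $1$ in $\R^2$ and to $|x|^{-1}$ in $\R^3$ — is inserted precisely to cure this: one must verify that it restores the decay needed to apply exterior Neumann uniqueness without producing any new spurious elements of $\ker\mathcal{A}$. This bookkeeping, rather than the Fredholm machinery, is where the real work lies.
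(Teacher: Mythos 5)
Your overall architecture (Fredholm alternative plus injectivity) is the same as the paper's, and your Fredholm step is fine --- the paper decomposes $\mathcal A$ as an invertible diagonal $\mathrm{diag}\big(I-K_{\text{mm}},\,I+\widetilde K_{\text{ii}}\big)$ plus compact off-diagonal blocks rather than as $I$ plus compact, but either reduction to injectivity works, and your observation that the off-diagonal kernels are smooth because $\dist(\Gamma_{\text{m}},\Gamma_{\text{i}})>0$ is exactly the paper's compactness argument. The first half of your injectivity proof (zero Dirichlet trace in the annulus, hence $u\equiv 0$ there; continuity of the normal derivative across $\Gamma_{\text{m}}$ plus exterior Neumann uniqueness in $\R^d\setminus\overline D$; jump relation gives $\varphi=0$) also matches the paper.

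The gap is in how you get $\psi=0$. You propose to continue $u$ into $\Omega$ using $\partial_\nu u=0$ on $\Gamma_{\text{i}}$ and ``harmonic continuation,'' but this fails on two counts. First, the \emph{interior} Neumann problem in $\Omega$ determines a harmonic function only up to an additive constant, so you can conclude at best $u\equiv c$ in $\Omega$, whence the jump relation gives $\psi=$ constant, not $\psi=0$. This is not a technicality: the \emph{unmodified} operator $I+K_{\text{ii}}$ genuinely has the constants in its null space (since $K_{\text{ii}}1=-1$ with the factor-$2$ convention), so any correct argument must explicitly invoke the rank-one correction $|x|^{2-d}\int_{\Gamma_{\text{i}}}\psi\,\text{d}s$ to exclude constant densities, and your sketch never does. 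Second, in $d=3$ the correction term $|x|^{2-d}=|x|^{-1}$ is singular at the origin (which is taken inside $\Omega$ precisely so that the term is harmonic and decaying in the exterior of $\Omega$), so $u$ is \emph{not} harmonic in $\Omega$ and the interior continuation argument does not even start. The paper avoids both problems by never entering $\Omega$: once $\varphi=0$, the vanishing of $u$ in the annulus gives $(I+\widetilde K_{\text{ii}})\psi=0$ directly from the exterior trace on $\Gamma_{\text{i}}$, and the injectivity of the \emph{modified} operator $I+\widetilde K_{\text{ii}}$ is the classical fact (from \cite{int-equ-book}) that finishes the proof. Your flagged ``main obstacle'' --- decay of the plain double-layer potential at infinity in $\R^2$ --- is actually not an issue (double-layer potentials decay like $O(|x|^{-1})$ in $\R^2$); the real purpose of the $|x|^{2-d}$ modification is to remove the constant densities from the kernel on $\Gamma_{\text{i}}$, which is exactly the point your argument leaves open.
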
 
\begin{proof}
To prove the claim we show that the operator is satisfies the Fredholm alternative and is injective. We begin by proving the injectivity of ${\mathcal A}$. To this end, assume that $(\varphi_1 , \varphi_2)^{\top} \in \text{Null}({\mathcal A})$ which implies that $w(x)=(\mathcal{D}_{\text{m}} \varphi_{\text{1}} )(x) + (\widetilde{\mathcal{D}}_{\text{i}} \varphi_{\text{2}} )(x)$ satisfies Laplace's equation in $D \setminus \overline{\Omega}$ with zero Dirichlet trace on the boundary. The uniqueness of Laplace's equation with zero Dirichlet data implies that $w=0$ in $D \setminus \overline{\Omega}$. The continuity of the normal derivative of the double layer potential on $\Gamma_{\text{m}}$ we have that $w$  satisfies Laplace's equation in $\R^d  \setminus \overline{D}$ with zero Neumann trace on $\Gamma_{\text{m}}$ and uniqueness gives that $w=0$ in $\R^d  \setminus \overline{D}$. Now using the jump relation for the double layer potential we conclude that $\varphi_1 = 0$. This gives that $w(x)=(\widetilde{\mathcal{D}}_{\text{i}} \varphi_{\text{2}} )(x)$ and since $w$ has zero exterior Dirichlet trace on $\Gamma_{\text{i}}$ which implies that $(I+ \widetilde{K}_{\text{ii}})  \varphi_{\text{2}}=0$. Since that operator $(I+ \widetilde{K}_{\text{ii}})$ is injective we have that $\varphi_{\text{2}}=0$, proving the injectivity. 

We now show that ${\mathcal A}$ is the compact perturbation of an invertible operator. To this end, we notice that 
$$ {\mathcal A} = \begin{bmatrix}  ( I- K_{\text{mm}} )  & 0 \\  0   &  (I+ \widetilde{K}_{\text{ii}})  \end{bmatrix} +  \begin{bmatrix}  0 & - \widetilde{K}_{\text{im}} \\ K_{\text{mi}}   &  0  \end{bmatrix}. $$
It is well known (see \cite{int-equ-book}) that both $( I- K_{\text{mm}} )$ and $(I+ \widetilde{K}_{\text{ii}})$ are invertible from $H^{1/2}(\Gamma_{p})$ to itself where $p=$m,i respectively. Next, we show that the operators  
$$K_{\text{mi}}: H^{1/2}(\Gamma_{\text{m}}) \longmapsto  H^{1/2}(\Gamma_{\text{i}}) \quad  \text{ and } \quad   \widetilde{K}_{\text{im}} : H^{1/2}(\Gamma_{\text{i}}) \longmapsto  H^{1/2}(\Gamma_{\text{m}})$$
are compact. Let $v=  ({\mathcal{D}}_{\text{m}} \varphi_{\text{m}} )(x) $ for some $\varphi_{\text{m}}  \in H^{1/2}(\Gamma_{\text{m}})$ which solves Laplace's equation in $D$ and is therefore analytic in the interior of $D$. We can conclude that $v\big|_{\Gamma_\text{i}} =K_{\text{mi}} \varphi_{\text{m}} \in H^{3/2}(\Gamma_{\text{i}})$ and the compactness follows from the compact embedding of $H^{3/2}$ into $H^{1/2}$. A similar argument proves that compactness of the operator $\widetilde{K}_{\text{im}} : H^{1/2}(\Gamma_{\text{i}}) \mapsto  H^{1/2}(\Gamma_{\text{m}})$, which proves the claim since ${\mathcal A} $ is injective and the compact perturbation of an invertible operator. 
\end{proof}

Recall that $u_0 \big|_{\Gamma_\text{i}}$ is still unknown so we use that $\partial_{\nu}  u_0 = g$ on $\Gamma_\text{m}$ to determine the Dirichlet value of the electrostatic potential $u_0$ on $\Gamma_\text{i}$. Solving \eqref{BIE1}-\eqref{BIE2} for $(\varphi ,  \psi )^{\top}$ in terms of $u_0 \big|_{\Gamma_\text{i}}$ we have that \eqref{int-rep} is a representation of $u_0$ in terms of it's Dirichlet data on $\Gamma_\text{i}$. Taking the normal derivative of \eqref{int-rep} on $\Gamma_\text{m}$ gives that 
\begin{align}
g = T_{\text{mm}}\,  \varphi + \widetilde{T}_{\text{im}} \, \psi \quad \text{ for } \, \, x \in \Gamma_\text{m}  \label{eq-from-data}
\end{align}
where the 
operators are given by 
$$ T_{\text{mm}} \, \varphi = \partial_{\nu(x)} (\mathcal{D}_{\text{m}}\,  \varphi )(x)  \quad \text{and } \quad \widetilde{T}_{\text{im}} \, \psi = \partial_{\nu(x)} (\widetilde{\mathcal{D}}_{\text{i}} \, \psi )(x) \quad \text{ for }  x \in \Gamma_\text{m}.$$
To recover $u_0 \big|_{\Gamma_\text{i}}$ one solves \eqref{eq-from-data} which can be written as   
\begin{align} \label{data-completion-equation}
{g} = \big[T_{\text{mm}} \quad   \widetilde{T}_{\text{im}} \big] \begin{bmatrix}  ( I- K_{\text{mm}} )  & - \widetilde{K}_{\text{im}} \\ K_{\text{mi}}   &  (I+ \widetilde{K}_{\text{ii}})  \end{bmatrix}^{-1} \begin{bmatrix} -f  \\ u_0 \big|_{\Gamma_\text{i}} \end{bmatrix} \quad \text{ for } \, \, x \in \Gamma_\text{m}. 
\end{align}

Once $u_0 \big|_{\Gamma_\text{i}}$ is known equation \eqref{int-rep} gives that $u_0$ in known for all $x \in D \setminus \overline{\Omega}$ and therefore $\partial_\nu u_0 \big|_{\Gamma_\text{i}}$ is given by taking the normal derivative of \eqref{int-rep} on $\Gamma_\text{i}$. Since the Cauchy data on $\Gamma_\text{i}$ is known the impedance condition $\partial_{\nu}  u_0 +\gamma(x) u_0=0$ can be used to reconstruct the unknown impedance parameter. One can  solve for the impedance 
$$ \gamma(x_n)   = - \frac{ \partial_{\nu} u_0 (x_n)}{u_0 (x_n)}  \quad \text{for} \quad n=1, \cdots ,N \quad \text{with} \, \,\,\,  x_n \in \Gamma_{\text{i}} . $$
One can also consider using a least squares method for recovering the impedance by 
$$\min\limits_{\gamma (x)  \in L^{\infty}(\partial \Omega)} \sum\limits_{n=1}^N \Big| \partial_{\nu}  u_0(x_n) +\gamma(x_n) u_0 (x_n) \Big|^2  \quad \text{ where }\quad  \gamma(x) = \sum\limits_{m=1}^M c_m \Psi_m(x)$$
for some choice of basis functions $\Psi_m$ for $x \in \Gamma_{\text{i}}$.
Since we assume that $\Lambda_0$ is known we can apply this inversion procedure for multiple Cauchy pairs $f_j$ and $g_j =\Lambda_0 f_j$ and determine the impedance parameter $\gamma_j (x)$ for $j=1, \cdots ,M$. Therefore, we can take the reconstructed impedance parameter to be the average of the reconstructions.

\section{Numerical Validation}\label{numerics}

We now provide some numerical examples of our inversion methods. To do so, we will consider reconstructing both Dirichlet and Impedance inclusions in the unit disk. Recall that, $\phi_z$ is the normal derivative of the greens function in the unit disk with zero trace on the boundary and is therefore given by the Poisson kernel  
$$ \phi_z (\theta) = \frac{1}{2\pi} \frac{1-|z|^2}{|z|^2 +1-2|z| \cos(\theta - \theta_z )}$$
where $\theta_z$ is the polar angle that the point $z$ makes with the positive $x$-axis. 

\subsection{Reconstruction of a Dirichlet inclusion }
For this case we only consider a simple example and will give more substantial reconstructions for the case of an impedance condition. 
Assume that the boundary of the inclusion $\partial \Omega=\rho \big( \cos(\theta) , \sin(\theta) \,  \big)$ where $0 < \rho<1$. Since we assume that $D$ is the unit disk in $\R^2$ we attempt to find a representation of the electrostatic potential $u_0(r ,\theta)$ which solves the problem 
\begin{align*}
\Delta u_0(r, \theta)=0 \quad \text{for all} \quad \rho < r<1 \quad \text{ and } \quad   \theta \in [0 , 2 \pi)\\  
u_0(1, \theta) = f(\theta) \quad \text{and} \quad u_0(\rho , \theta)=0.
\end{align*}
Now, since $u_0(r , \theta)$ solves Laplace's equation in an annular region 
we assume it can be written as linear combination of solutions to the to problem in the annuals and therefore has the form
\begin{align*}
u_0(r,\theta)=a_0 +b_0 \ln r +  \sum_{|n|=1}^{\infty} \big(a_n r^{|n|} + b_n r^{-|n|} \big)  \, \text{e}^{ \text{i} n \theta}. 
\end{align*}
By applying the boundary conditions we have that (see \cite{iterative-inclusion})
\begin{align*}
u_0(r,\theta)=\frac{f_0}{\ln \rho} \ln \left( \frac{\rho}{r} \right) + \sum_{|n|=1}^{\infty}  \frac{f_n}{1-\rho^{2|n|}}  \left(r^{|n|} - r^{-|n|}\rho^{2|n|} \right)  \text{e}^{ \text{i} n \theta}
\end{align*}
where $f_n$ are the Fourier coefficients for $f$ given by 
$$ f_n= \frac{1}{2 \pi} \int\limits_{0}^{2 \pi} f(\phi)\,  \text{e}^{- \text{i} n \phi} \, d \phi .$$
Therefore, by taking the derivative with respect to $r$ gives that 
$$ \partial_r u_0(1,\theta)= - \frac{f_0}{\ln \rho }+ \sum_{|n|=1}^{\infty}  |n|  f_n \, \frac{1+\rho^{2|n|}}{1-\rho^{2|n|}} \,  \text{e}^{ \text{i} n \theta}.$$
It is clear that the electrostatic potential for a material without a perfectly conducting inclusion is given by 
\begin{align}
u(r,\theta)={f_0} + \sum_{|n|=1}^{\infty}  {f_n} r^{|n|} \text{e}^{ \text{i} n \theta} \quad \text{and} \quad  \partial_r u(1,\theta)= \sum_{|n|=1}^{\infty} |n|  f_n  \text{e}^{ \text{i} n \theta}. \label{potential}
\end{align}
This now gives that the difference of the DtN mapping is given by 
\begin{align}
(\Lambda - \Lambda_0 ) f =\frac{f_0}{\ln \rho} -2 \sum_{|n|=1}^{\infty}  |n|  \frac{ \rho^{2|n|}}{1-\rho^{2|n|}} f_n \, \text{e}^{ \text{i} n \theta}. \label{dtn-series}
\end{align}
By interchanging summation and integration we obtain that 
$$(\Lambda - \Lambda_0 ) f =   \int\limits_{0}^{2 \pi} K(\theta , \phi) f(\phi) \, d \phi $$
where the kernel is given by 
\begin{align}
K(\theta , \phi) = \frac{1}{ 2 \pi \ln \rho} -\frac{1}{\pi} \sum_{|n|=1}^{\infty}  |n| \frac{ \rho^{2|n|}}{1-\rho^{2|n|}}   \text{e}^{ \text{i} n( \theta - \phi)}. \label{def-k}
\end{align}

We now consider the approximation of $(\Lambda - \Lambda_0)$ by a truncated series. In our experiments we will take the terms for $0 \leq |n| \leq 20$. In the following we see that the converges of the truncated series $(\Lambda - \Lambda_0)_N$ converges exponentially fast to $(\Lambda - \Lambda_0)$ as $N \rightarrow \infty$.

{
\begin{theorem}\label{DtN-approx}
Let $(\Lambda - \Lambda_0)_N : H^{1/2}(0,2 \pi) \longmapsto H^{-1/2}(0,2 \pi)$ be the truncated series approximation of $(\Lambda - \Lambda_0)$ given by \eqref{dtn-series} then we have that 
$$\big\| (\Lambda - \Lambda_0) -(\Lambda - \Lambda_0)_N \big\| \leq C \rho^{2(N+1)} $$
where $\| \cdot \|$ is the operator norm on $\mathcal{L} \big( H^{1/2}(0,2 \pi) \, , \, H^{-1/2}(0,2 \pi) \big).$
\end{theorem}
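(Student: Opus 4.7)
The plan is to work entirely on the Fourier side, using the fact that for the unit circle the Sobolev norms diagonalize: $\|f\|_{H^s}^2 = \sum_{n\in\Z}(1+n^2)^s |f_n|^2$, where $f_n$ are the Fourier coefficients. Since both $(\Lambda-\Lambda_0)$ and $(\Lambda-\Lambda_0)_N$ are Fourier multipliers (read off directly from \eqref{dtn-series}), the difference
\[
\bigl[(\Lambda-\Lambda_0)-(\Lambda-\Lambda_0)_N\bigr]f \;=\; -2\sum_{|n|\geq N+1} |n|\,\frac{\rho^{2|n|}}{1-\rho^{2|n|}}\,f_n\,\mathrm{e}^{\mathrm{i}n\theta}
\]
is again a Fourier multiplier with symbol $m_n := -2|n|\rho^{2|n|}/(1-\rho^{2|n|})$ for $|n|\geq N+1$ and $m_n=0$ otherwise. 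The operator norm of such a multiplier between $H^{1/2}$ and $H^{-1/2}$ is exactly
\[
\sup_{|n|\geq N+1} \frac{|m_n|}{(1+n^2)^{1/2}},
\]
so the whole problem reduces to estimating this supremum.

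First I would note that for $|n|\geq 1$ one has $|n|/(1+n^2)^{1/2}\leq 1$, so
\[
\frac{|m_n|}{(1+n^2)^{1/2}} \;=\; \frac{2|n|}{(1+n^2)^{1/2}}\cdot\frac{\rho^{2|n|}}{1-\rho^{2|n|}} \;\leq\; \frac{2}{1-\rho^{2}}\,\rho^{2|n|},
\]
where I used that $|n|\mapsto 1-\rho^{2|n|}$ is increasing so $1-\rho^{2|n|}\geq 1-\rho^{2}$ whenever $|n|\geq 1$. Taking the supremum over $|n|\geq N+1$, the exponential factor $\rho^{2|n|}$ is maximized at $|n|=N+1$, giving the bound
\[
\bigl\|(\Lambda-\Lambda_0)-(\Lambda-\Lambda_0)_N\bigr\|_{\mathcal{L}(H^{1/2},H^{-1/2})} \;\leq\; \frac{2}{1-\rho^{2}}\,\rho^{2(N+1)},
\]
so one may take $C=2/(1-\rho^{2})$, which is a finite constant depending only on $\rho\in(0,1)$. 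The constant term $f_0/\ln\rho$ is kept in both $(\Lambda-\Lambda_0)$ and $(\Lambda-\Lambda_0)_N$ and therefore cancels out, so it plays no role.

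There is essentially no serious obstacle here: the statement is a clean multiplier estimate, and once one accepts the Fourier characterization of $H^{\pm 1/2}(0,2\pi)$ the calculation is algebraic. The only thing to be careful about is the harmless bookkeeping for the $n=\pm 1$ term, where $|n|/(1+n^2)^{1/2}$ attains its smallest value $1/\sqrt{2}$ rather than $1$, and the denominator $1-\rho^{2|n|}$ is smallest; both effects are absorbed into the constant $2/(1-\rho^{2})$. The exponential decay in $N$ is inherited directly from the exponentially decaying symbol $\rho^{2|n|}$, reflecting the analyticity of $u_0$ away from $\partial\Omega$.
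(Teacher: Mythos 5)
Your proof is correct, but it takes a genuinely different route from the paper's. The paper bounds the tail pointwise: it applies the Cauchy--Schwarz inequality in $\ell^2$ to get an $L^\infty$ bound on $\big[(\Lambda-\Lambda_0)-(\Lambda-\Lambda_0)_N\big]f$ in terms of $\sum_{|n|\ge N+1}|n|\rho^{4|n|}(1-\rho^{2|n|})^{-2}$ and $\sum_{|n|\ge N+1}|n||f_n|^2\le C\|f\|^2_{H^{1/2}}$, and then passes from the $L^\infty(0,2\pi)$-norm to the $H^{-1/2}(0,2\pi)$-norm by the embedding. You instead exploit the fact that the tail is a Fourier multiplier and that both Sobolev norms diagonalize in the exponential basis, so the $\mathcal{L}(H^{1/2},H^{-1/2})$-norm is exactly $\sup_{|n|\ge N+1}|m_n|(1+n^2)^{-1/2}$, which you then bound by $2(1-\rho^2)^{-1}\rho^{2(N+1)}$. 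Your argument is shorter and sharper: it produces the explicit constant $C=2/(1-\rho^2)$ independent of $N$, whereas the paper's route through $L^\infty$ and the tail sum $\sum_{|n|\ge N+1}|n|\rho^{4|n|}$ requires absorbing a polynomial factor in $N$ into the constant (a point the paper glosses over), and it is the natural argument for a diagonal operator on the circle. What the paper's Cauchy--Schwarz approach buys is robustness: it does not rely on the operator being a multiplier or on the exact Fourier characterization of the $H^{\pm 1/2}$ norms, so it would survive perturbations that destroy diagonality. Both proofs correctly dispose of the $f_0/\ln\rho$ term, which cancels in the difference.
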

\begin{proof}
To begin, let $f \in H^{1/2}(0,2 \pi)$ then we have that by \eqref{dtn-series}
\begin{align*}
(\Lambda - \Lambda_0)f -(\Lambda - \Lambda_0)_N f  = -2  \sum_{|n|=N+1}^{\infty}  |n|  \frac{ \rho^{2|n|}}{1-\rho^{2|n|}} f_n \, \text{e}^{ \text{i} n \theta}.
\end{align*}
Now by the Cauchy-Schwarz inequality in $\ell^2$ we have that 
\begin{align*}
\Big|  (\Lambda - \Lambda_0)f -(\Lambda - \Lambda_0)_N f \Big|^2  &\leq  C  \sum_{|n|=N+1}^{\infty}  |n|  \frac{ \rho^{4|n|}}{(1-\rho^{2|n|})^2}  \big| \text{e}^{ \text{i} n \theta} \big|^2 \, \cdot \sum_{|n|=N+1}^{\infty}  |n| |f_n|^2 \\
												       &\leq C \big\| f \big\|^2_{H^{1/2}(0,2 \pi)} \, \sum_{|n|=N+1}^{\infty}  |n|  \rho^{4|n|}. 
\end{align*}
We have used that 
$$ \frac{1}{2 \pi} \big\| f \big\|^2_{H^{1/2}(0,2 \pi)} = \sum_{|n|=0}^{\infty} \big(1+ |n|^2\, \big)^{1/2}\,  |f_n|^2.$$
Now, notice that 
$$\Big|  (\Lambda - \Lambda_0)f -(\Lambda - \Lambda_0)_N f \Big|^2 \leq C\, \rho^{4(N+1)}\, \big\| f \big\|^2_{H^{1/2}(0,2 \pi)}  .$$
Since the $H^{-1/2}(0,2 \pi)$-norm is bounded by $L^{\infty}(0,2 \pi)$-norm we can conclude that 
$$ \big\| (\Lambda - \Lambda_0) -(\Lambda - \Lambda_0)_N \big\| \leq C\rho^{2(N+1)}$$
proving the claim. 
\end{proof}

}

We can approximate the difference of the DtN mappings $(\Lambda - \Lambda_0)$ where we apply Theorem \ref{LSM} to the discretized operator. We discretize our operator by using a simple collocation method with $64$ equally spaced points in the interval $[0 , 2\pi )$. This gives a $64 \times 64$ matrix approximation of $(\Lambda - \Lambda_0)$ which we will denote ${\bf A}$ and a vector $ {\bf b}_z=[\phi_z(\theta_j)]_{j=1}^{64}$ where $\theta_j$ are the collocation points.  In our calculations we add random noise to the DtN mappings given by ${\bf A}^{\delta}_{i,j}={\bf A}_{i,j}\big( 1 +\delta {\bf E}_{i,j} \big)$ where the mean zero random matrix ${\bf E}$ {satisfying } $\| {\bf E} \|_2 =1$. This gives a discretized version of \eqref{PGE}  
\begin{align}
{\bf A}^{\delta} {\bf f}_z =  {\bf b}_z \quad \text{ for } \quad z \in D. \label{matrix-equ}
\end{align}
Since the operator $(\Lambda - \Lambda_0)$ is compact we have that it's matrix approximation is ill-conditioned. In order to solve \eqref{matrix-equ} we use  Tikhonov's regularization. To this end, we let $\sigma_i \in \R^+$ be the singular values with $ {\bf u}_i$ and $ {\bf v}_i$ in $\C^{64}$ the singular vectors of the matrix ${\bf A}^{\delta}$. We let $ {\bf f}_z^{\text{Tik}}$ be the regularized solution to \eqref{matrix-equ} given by 
\begin{align*} 
 {\bf f}_z^{\text{Tik}} = \sum\limits_{i=1}^{64} \frac{\sigma_i}{\alpha(\delta) + \sigma^2_i} \, ( {\bf b}_z ,  {\bf v}_i)_{\ell^2} \,  {\bf u}_i. 
 \end{align*}
Here $ {\bf f}_z^{\text{Tik}}$ denotes the Tikhonov's regularization solution where $\alpha$ is chosen by the discrepancy principle. Therefore, to reconstruct the inclusions we define the function 
$$W(z) =\big\|  {\bf f}_z^{\text{Tik}} \big\|^{-1}_{\ell^2}. $$
Even though we plot the weaker $L^2$-norm we see that this is sufficient to approximate the inclusion $\Omega$. In the following experiments we take the uniformly distributed noise level $\delta=0.05$ where we plot the indicator function $W(z)$, see Figures \ref{reconstruct1} and \ref{reconstruct2}.

\begin{figure}[ht!]
\hspace{-0.8in}\includegraphics[scale=0.25]{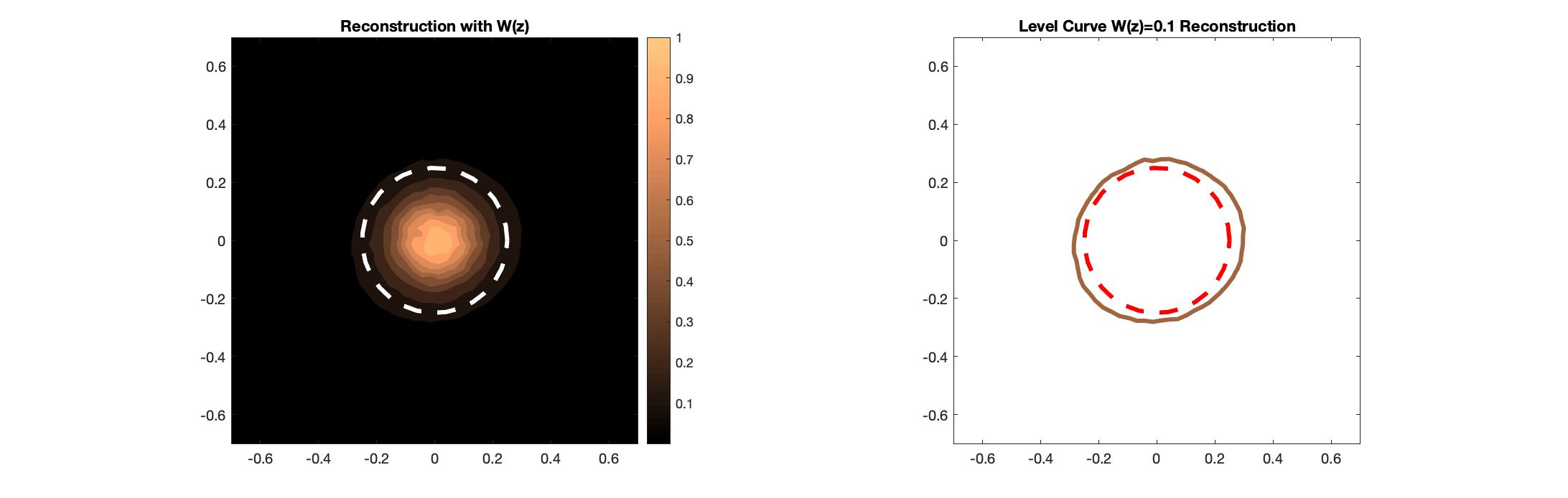} 
\caption{Reconstruction of a perfectly conducting circular inclusion by the Sampling Method with radius $\rho=0.25$ }
\label{reconstruct1}
\end{figure}

\begin{figure}[ht!]
\hspace{-0.8in}\includegraphics[scale=0.25]{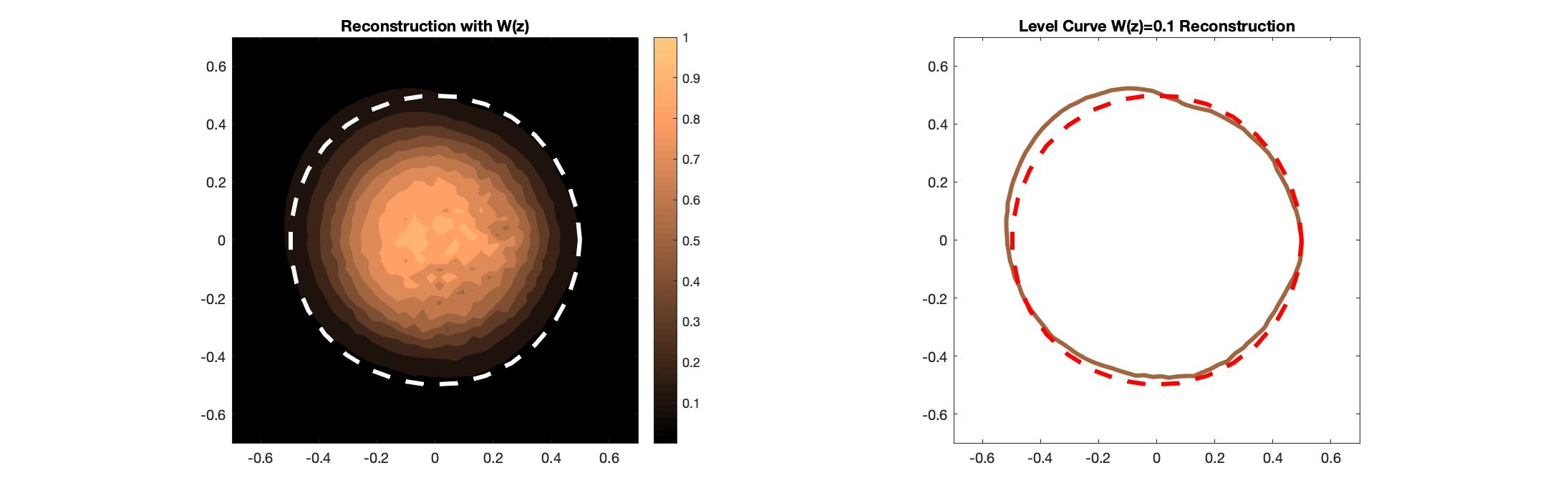}
\caption{Reconstruction of a perfectly conducting circular inclusion by the Sampling Method with radius $\rho=0.5$ }
\label{reconstruct2}
\end{figure}

\subsection{Reconstruction of an impedance inclusion}
To reconstruct an inclusion with an impedance coefficient $\gamma\big(x(\theta)\big)$ we us a boundary integral equation to simulate the DtN mappings. To this end, we assume that $u_0$ can be written as a combination of a double layer potential on $\partial D=\Gamma_{\text{m}}$ and a single layer potential on $\partial \Omega = \Gamma_{\text{i}}$. Here the boundary of $\Gamma_{\text{m}}$ is given by the boundary of the unit disk and $\Gamma_{\text{i}}$ is given by $x(\theta): [0,2 \pi] \mapsto \R^2$  which is a $2\pi$-periodic representation of the $C^2$ boundary. Applying the boundary conditions 
$$ u_0 \big|_{\Gamma_{\text{m}}}=f  \quad \text{ and } \quad \big( \partial_{\nu} u_0 + \gamma  u_0 \big) \big|_{\Gamma_{\text{i}}}=0$$
gives a $2 \times 2$ system of boundary integral equations. The boundary integral equations are solved via the Nystr\"{o}m  method using 32 equally spaced points which gives a representation of $u_0$ in $D \setminus \overline{\Omega}$. This should give a sufficiently accurate approximation for the electrostatic potential due to the exponential convergence (see \cite{int-equ-book}). We then compute  $\Lambda_0\,  \text{e}^{\text{i} n \theta}$ where $\Lambda_0$ is the DtN mapping for the material with the inclusion by taking the normal derivative of $u_0$ on $\Gamma_{\text{m}}$. It is clear that $\Lambda \, \text{e}^{\text{i} n \theta} = n \text{e}^{\text{i} n \theta}$ for all $n \in \Z$. To obtain a discretized version of \eqref{PGE} we consider 
$$f_z \approx \sum\limits_{n=0}^{19} f^z_n \text{e}^{\text{i} n \theta} \quad \text{which implies that } \quad \sum\limits_{n=0}^{19} f^z_n (\Lambda-\Lambda_0) \text{e}^{\text{i} n \theta} \approx \phi_z $$
and solve for the first 20 Fourier coefficients to the solution $f_z$ to \eqref{PGE}. In our experiments we solve the above equation for $\theta_j \in [0 , 2\pi )$ where $\theta_j$ are taken to be 20 equally spaced points. This gives a $20 \times 20$ linear system which is solved using a spectral cut-off, where the cut-off parameter is chosen to be $10^{-4}$. To visualize the inclusion as in the previous examples we let 
$$W(z)=\left[ \sum\limits_{n=0}^{19} \big| f^z_n \big|^2 \right]^{-1/2} . $$
We implement this for three different inclusions given by 
\begin{align*}
\text{Circular shaped inclusion:} \quad x(\theta) &= \big( 0.3 \cos(\theta) \, , \, 0.3 \sin(\theta) \big)\\
\text{Elliptical shaped inclusion:} \quad x(\theta) &= \big( 0.5 \cos(\theta) \, , \, 0.3 \sin(\theta) \big)\\
\text{Cardioid shaped inclusion:} \quad x(\theta) &= \frac{0.35+0.3\cos(\theta)+0.05\sin(2\theta)}{1+0.7\cos(\theta)} \, \big( \cos(\theta) \, , \,  \sin(\theta) \big)
\end{align*}
with the impedance parameter $\gamma \big(x(\theta)\big) = 2-\sin^4(\theta)$ where $4\%$ mean zero uniformly distributed random noise is added to the flux data measurements, see Figures \ref{recon7}, \ref{recon8} and \ref{recon9}.  

\begin{figure}[H]
\includegraphics[scale=0.18]{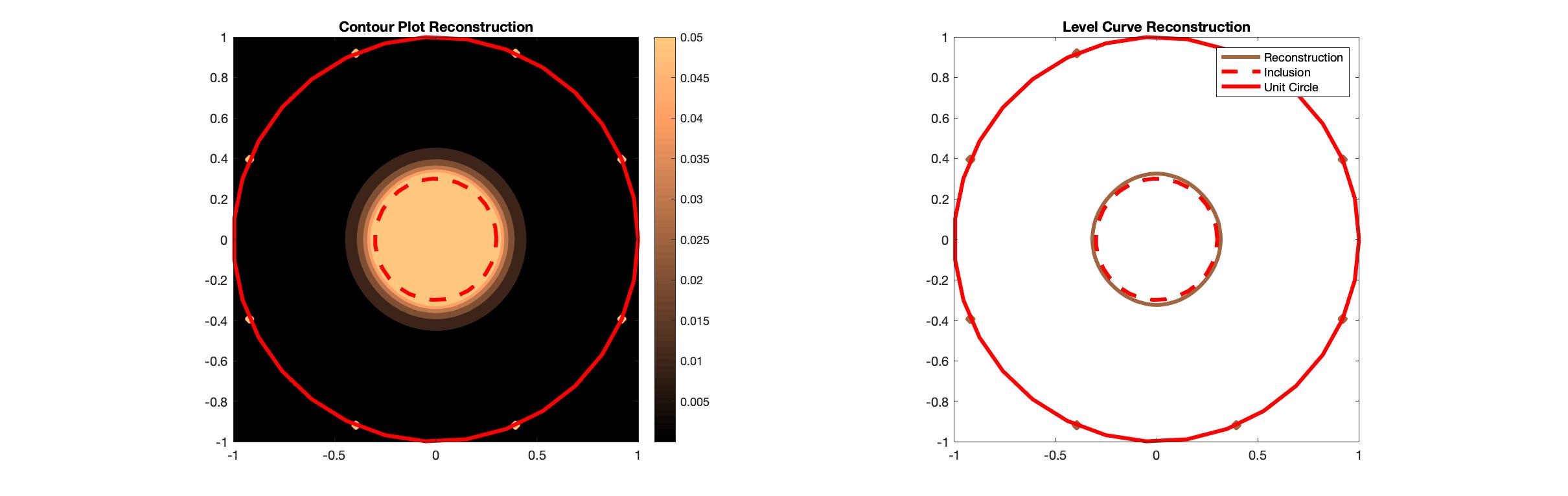}
\caption{Reconstruction the circle via the Sampling Method with impedance parameter $\gamma\big(x(\theta)\big)=2-\sin^4(\theta)$ with cut-off parameter $10^{-4}$. }
\label{recon7}
\end{figure}

\begin{figure}[H]
\includegraphics[scale=0.18]{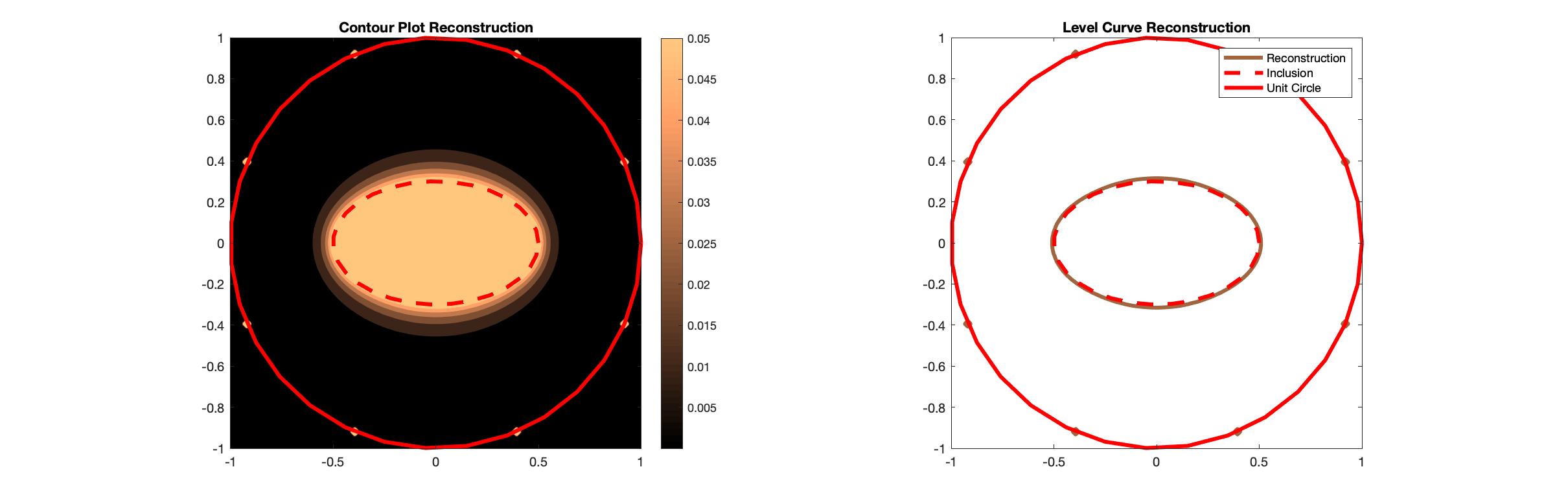}
\caption{Reconstruction of the ellipse via the Sampling Method with impedance parameter $\gamma\big(x(\theta)\big)=2-\sin^4(\theta)$ with cut-off parameter $10^{-4}$. }
\label{recon8}
\end{figure}

\begin{figure}[H]
\includegraphics[scale=0.18]{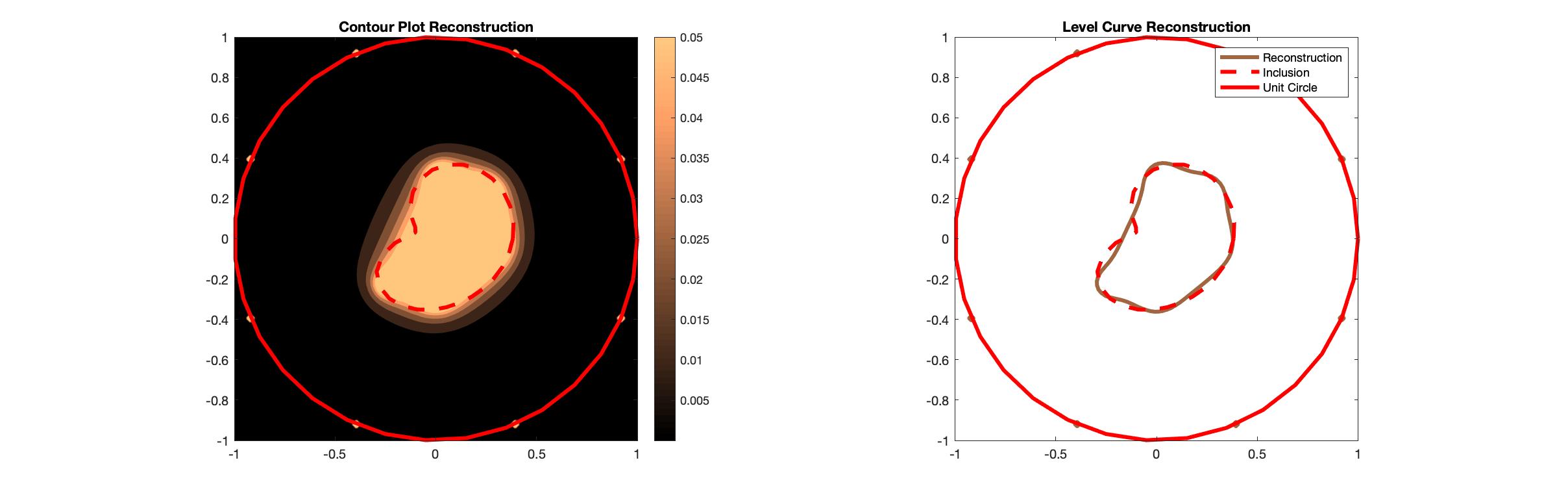}
\caption{Reconstruction of the cardioid via the Sampling Method with impedance parameter $\gamma\big(x(\theta)\big)=2-\sin^4(\theta)$ with cut-off parameter $10^{-4}$.}
\label{recon9}
\end{figure}

\subsection{Reconstruction of the impedance parameter}

We now give a numerical example of recovering the impedance parameter using the method described in Section \ref{BIE}. Therefore, we present an example where the boundary has been reconstructed by Theorem \ref{LSM}. Here we consider the ellipse $x(\theta) = \big( 0.5 \cos(\theta) \, , \, 0.3 \sin(\theta) \big)$ with impedance parameter $\gamma\big(x(\theta)\big)=2-\sin^4(\theta)$ from the previous section. In our calculations we first  represent the reconstructed curve using trigonometric polynomials. To this end, we assume that the inclusion $\Omega$ is centered at the origin and taking the values on the level curve given in Figure \ref{recon7} we approximate 
$$ x(\theta) = \big( x_1(\theta) , x_2(\theta) \big) \quad \text{ such that } \quad x_p (\theta) = \sum\limits_{m=1}^{M} a_m^{(p)} \cos (m \theta) +b_m^{(p)} \sin (m \theta) $$
where $p=1,2$. The coefficients $a_m^{(p)}$ and $b_m^{(p)}$ are solved for in the least squares sense with Tikhonov regularization such that $x(\theta)$ approximates the reconstructed curve. In our calculations we penalize the $H^2(0,2 \pi)$ norm of $x_p (\theta)$ by taking the regularization parameter based on the level of noise in the data. 

Now that we have an approximation of $x(\theta)$ we can reconstruct the impedance using boundary integral equations. We apply the data completion algorithm described in Section \ref{BIE} to recover the Cauchy data on the interior boundary $\Gamma_{\text{i}}$. Using the same method as in the previous Section for any given $f$ we can compute the corresponding $\Lambda_0 f$.
Note that our original data on $\Gamma_{\text{m}}$ is subject to $4\%$ mean zero random noise and these errors transfer to the reconstruction of $\Gamma_{\text{i}}$. This gives that to reconstruct $\gamma\big(x(\theta)\big)$ we must solve a discretized version of \eqref{data-completion-equation} where the Nystr\"{o}m  method using 64 points is used to discretize the equation. Using a standard Tikhonov regularization scheme we solve the discretized version of \eqref{data-completion-equation} which allows use to determine $u_0$ and $\partial_{\nu} u_0$ on $\Gamma_{\text{i}}$ for a given $f$. In our calculations we take $f (\theta) = \cos (k \theta) \text{ and } \sin (k \theta)$ for $k=1, \cdots ,8$  which corresponds to having 16 voltage and current measurements. For each $f$ the impedance is computed by 
$$  \gamma \big(x(\theta_j) \big )   = - \frac{ \partial_{\nu} u_0 \big(x(\theta_j) \big )}{u_0 \big(x(\theta_j) \big )} \quad \text{where} \quad \theta_j = \frac{2 j \pi}{64}  \quad  \text{ for }\quad j = 0,\cdots, 64.$$ 
In Figure \ref{recon-imped} we show the approximation of the reconstructed ellipse as well as  the plot of the reconstructed impedance which is obtain by averaging the 16 results. 

\begin{figure}[H]
\includegraphics[scale=0.35]{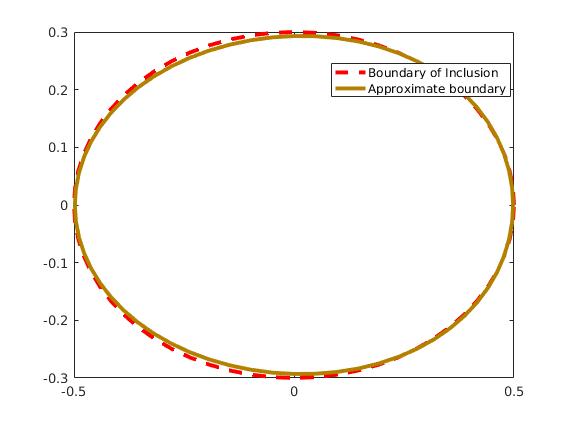}\includegraphics[scale=0.35]{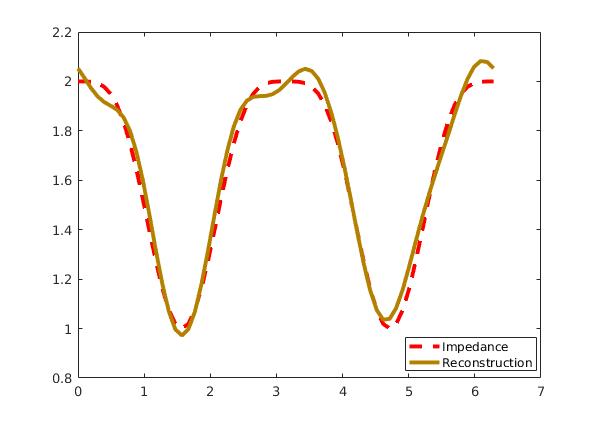}
\caption{On the left an approximation of the boundary of the inclusion for $M=7$. On the right is the reconstruction of the impedance  $\gamma\big(x(\theta)\big)=2-\sin^4(\theta)$ from 16-Cauchy pairs.}
\label{recon-imped}
\end{figure}


\section{Conclusion}
In conclusion, we have developed the linear sampling method for recovering two types of impenetrable inclusion from electrostatic data. Also, we have derived a direct method to recover that boundary condition on the recovered impenetrable inclusion without a-prior knowledge of the boundary condition.  An alternative sampling method is the factorization method (see \cite{kirschbook}) where one proves that the range of a known operator defined by the measurements operator uniquely determines the region $\Omega$ and gives a simple numerical inversion algorithm. In \cite{EIT-inclusion} the factorization method has been used to reconstruct penetrable inclusions from electrostatic Cauchy data. To our knowledge, the only manuscript that analyzes the factorization method for the EIT problem with an impenetrable inclusion is done in \cite{gibc-eit} where the coefficients in the boundary condition are assumed to complex valued. Applying the  factorization method for the problems considered in this paper is on going research. In \cite{MUSICImp} the MUSIC algorithm, which can be seen as a discrete version of the factorization method, was derived to detect corrosion of a known interior boundary. For many inverse boundary value problems for elliptic equations the factorization method has been used to validate the linear sampling method using the eigenvalue decomposition of the measurements see \cite{arens,LSM-AL}.


\section{Acknowledgments}
The research was started while the author was at Texas A$\&$M University. The university's hospitality during the time the author was there is greatly appreciated. The author would also like to thank William Rundell for providing the code to solve the direct problem and for helpful discussions during the authors time at Texas A$\&$M University and beyond.


\end{document}